\begin{document}

\title{MIX $\star$-autonomous quantales
  \\ 
  and the continuous weak
  order
} 

\ifdefined\llncs
\author{Maria Jo\~{a}o Gouveia\inst{1}\thanks{Partially supported by
    FCT under grant SFRH/BSAB/128039/2016} \and Luigi
  Santocanale\inst{2}\thanks{Partially supported by the TICAMORE
    project ANR-16-CE91-0002-01}}
\institute{
  \email{mjgouveia@fc.ul.pt} \\
  Universidade de Lisboa, 
  1749-016, Lisboa, Portugal \\
  \and
  \email{luigi.santocanale@lis-lab.fr} \\
  LIS, CNRS UMR 7020, Aix-Marseille Universit\'e, France}
\authorrunning{Gouveia and Santocanale}
\else
\author[M. J. Gouveia]{Maria Jo\~{a}o Gouveia$^1$}
\address{
  $^1$Faculdade de Ci\^{e}ncias, Universidade de
  Lisboa, Portugal
}
\email{mjgouveia@fc.ul.pt}
\thanks{$^1$Partially supported by FCT under grant
SFRH/BSAB/128039/2016}

\author[L. Santocanale]{Luigi Santocanale$^2$}
\address{
  $^2$LIS, CNRS UMR 7020, Aix-Marseille Universit\'e,
  France
}
\email{luigi.santocanale@lis-lab.fr}
\thanks{$^2$Partially supported by the TICAMORE
  project ANR-16-CE91-0002-01}
\fi

\maketitle

\begin{abstract}
  The set of permutations on a finite set can be given a lattice
structure (known as the weak Bruhat order). The lattice structure is
generalized to the set of words on a fixed alphabet
$\Sigma = \set{x,y,z,\ldots }$, where each letter has a fixed number of
occurrences (these lattices are known as multinomial lattices and, in
dimension $2$, as lattices of lattice paths).  By interpreting the
letters $x,y,z,\ldots $ as axes, these words can be interpreted as
discrete increasing paths on a grid of a $d$-dimensional cube, where
$d = \card(\Sigma)$.

We show in this paper how to extend this order to images of continuous
monotone paths 
from the unit interval to a $d$-dimensional cube.  The key tool used
to realize this construction is the quantale $\LjI$ of \jcont
functions from the unit interval to itself; the construction relies on
a few algebraic properties of this quantale: it is \staraut and it
satisfies the mix rule.

We begin developing a structural theory of these lattices by
characterizing join-irreducible elements, and by proving these
lattices are generated from their \jirr elements under infinite joins.

\end{abstract}

\section{Introduction}

Combinatorial objects (trees, permutations, discrete paths, \ldots )
are pervasive in mathematics and computer science; often these
combinatorial objects can be organised into some ordered collection in
such a way that the underlying order is a lattice.

Building on our previous work on lattices of binary trees (known as
Tamari lattices or \Associahedra) and lattices of permutations (known
as weak Bruhat orders or \Permutohedra) as well as on related
constructions \cite{ORDER-24-3,YAAMA-51-03,EJC,IJAC,STA0,STA,JEMS}, we
have been led to ask whether these constructions can still be
performed when the underlying combinatorial objects are replaced with
geometric ones.

More precisely we have investigated the following problem.
Multinomial lattices \cite{BB} generalize \Permutohedra in a natural
way.  Elements of a multinomial lattice are words on a finite totally
ordered alphabet $\Sigma = \{\,x,y,z\ldots \,\}$ with a fixed number
of occurrences of each letter. The order is obtained as the reflexive
and transitive closure of the binary relation $\covered$ defined by
$wabu \covered wbau$, whenever $a,b \in \Sigma$ and
$a < b$ (if we consider words with exactly one occurrence of each
letter, then we have a \Permutohedron).  Now these words can be given
a geometrical interpretation as discrete increasing paths in some
Euclidean cube of dimension $d = \card(\Sigma)$, so the \wo can be
thought of as a way of organising these paths into a lattice
structure. When $\Sigma$ contains only two letters, then these
lattices are also known as lattices of (lattice) paths \cite{pinzani}
and we did not hesitate in \cite{ORDER-24-3} to call the multinomial
lattices ``lattices of paths in higher dimensions''.  The question
that we raised is therefore whether the \wo can be extended from
discrete paths to continuous increasing paths.

We already presented at the conference TACL 2011 
the following result, positively answering this question.
\begin{prop}
  Let $d \geq 2$.  Images of increasing continuous paths from
  $\vec{0}$ to $\vec{1}$ in $\mathbb{R}^{d}$ can be given the
  structure of a lattice; moreover, all the \Permutohedra and all the
  multinomial lattices can be embedded into one of these lattices
  while respecting the dimension $d$.
\end{prop}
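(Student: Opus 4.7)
The plan is to realise images of increasing continuous paths in the cube $[0,1]^d$ (into which paths from $\vec{0}$ to $\vec{1}$ in $\mathbb{R}^d$ rescale without loss of generality) as algebraic data inside the quantale $\LjI$, and then import the lattice structure via pointwise operations. The structural properties of $\LjI$ announced in the abstract, namely being $\star$-autonomous and satisfying the mix rule, are what make this transport work.

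\emph{Step 1 (encoding).} To an image $\Gamma$ of an increasing continuous path, and to each ordered pair $(i,j)$ of distinct indices in $\{1,\ldots,d\}$, I would associate the map $f^{\Gamma}_{ij} : [0,1] \to [0,1]$ sending $t$ to the supremum of the $j$-th coordinates of points of $\Gamma$ whose $i$-th coordinate is at most $t$. Monotonicity of $\Gamma$ makes $f^{\Gamma}_{ij}$ monotone, and defining it as a supremum over a downward-closed set makes it join-continuous, hence an element of $\LjI$; conversely $\Gamma$ is recoverable from the family $(f^{\Gamma}_{ij})_{i\neq j}$, so the encoding is injective.

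\emph{Step 2 (admissibility).} I expect the tuples arising from genuine path images to be cut out by compatibility equations of the form $f_{jk} \circ f_{ij} \leq f_{ik}$ together with duality identities relating $f_{ij}$ and $f_{ji}$. The $\star$-autonomy of $\LjI$ is what allows these constraints to be phrased purely in the quantale language (tensor, par, and linear negation), while the mix rule is the algebraic fact that forces the componentwise meet and join of two admissible tuples to remain admissible.

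\emph{Step 3 (lattice and embedding).} Endow the set $\mathcal{P}_d$ of admissible tuples with the componentwise order inherited from $\LjI$; the meet/join preservation of Step 2 makes this a lattice, and transporting back through the encoding makes images of increasing paths into a lattice. For the embedding, a word $w$ of a multinomial lattice describes a monotone staircase in $[0,1]^d$, which the encoding turns into a tuple of step functions with jumps at rational heights; the covering move $wabu \covered wbau$ corresponds to switching a single right-angle in the staircase, and translates into a pointwise change in exactly one coordinate function, so the encoding is an order-embedding that respects $d$. The Permutohedra are recovered as the special case where all letter multiplicities equal one.

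The crux, and the likely main obstacle, is Step 2: the functions $f^{\Gamma}_{ij}$ for different pairs are linked in a non-trivial way, and the pointwise meet or join of two admissible families does not a priori preserve their interplay. The whole point of working in $\LjI$ rather than in a mere poset of monotone functions is that its $\star$-autonomous plus mix structure should be exactly strong enough to force this preservation, and this is where the algebraic input of the paper is genuinely used.
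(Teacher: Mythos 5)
Your Steps 1 and 2 follow the paper's actual construction quite closely: paths are encoded by tuples $(f_{i,j})$ of functions obtained from the coordinate projections, and the admissible tuples (the paper calls them \emph{compatible}, equivalently \emph{clopen}) are exactly those satisfying $f_{j,k}\circ f_{i,j}\leq f_{i,k}$ together with the duality $f_{j,i}=f_{i,j}^{\star}$. One small slip: taking the \emph{supremum} of the $j$-th coordinates over points whose $i$-th coordinate is at most $t$ yields a \emph{meet}-continuous function, an element of $\LmI$, not of $\LjI$; the join-continuous encoding is the corresponding infimum over the fiber. This is harmless, since the paper shows the two encodings are interchangeable, but your justification ("a supremum over a downward-closed set is join-continuous") is not correct as stated.

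The genuine gap is the passage from Step 2 to Step 3. It is simply false that the componentwise meet and join of two admissible tuples are again admissible, and the mix rule does not rescue this: already for $Q=\two$ and $d=3$, clopen tuples are inversion sets of permutations, and the union of the transitive sets $\set{(1,2)}$ and $\set{(2,3)}$ (the inversion sets of $213$ and $132$) is not transitive. Closed tuples are stable under arbitrary pointwise meets and open tuples under arbitrary pointwise joins, but clopen tuples are stable under neither operation. Accordingly, the paper does \emph{not} use pointwise lattice operations: the join of a family of clopen tuples is defined as the \emph{closure} of their pointwise join and the meet as the \emph{interior} of their pointwise meet, where the closure (resp.\ interior) of a tuple is computed by tensoring (resp.\ $\oplus$-ing) its entries along all subdivisions of the interval $[i,k]$. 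The actual content of the argument --- and the precise point where the mix rule $\alpha\otimes\beta\leq\alpha\oplus\beta$ is used, via the derived inequality $(\alpha\oplus\beta)\otimes(\gamma\oplus\delta)\leq\alpha\oplus(\beta\otimes\gamma)\oplus\delta$ --- is the theorem that the interior of a closed tuple is still closed (equivalently, by $\star$-duality, that the closure of an open tuple is still open); this is what guarantees that the closure of a pointwise join of clopens is again clopen and hence is the least clopen upper bound. Your closing paragraph correctly identifies this as the crux, but the resolution you propose (that mix forces pointwise operations to preserve admissibility) is exactly the step that fails; the fix is to change the operations, not to hope that the pointwise ones close up.
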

We called this lattice the \emph{\cwo}.
The proof of this result was complicated by the many computations
arising from the structure of the reals and from analysis. We recently
discovered a cleaner proof of the above statement where all these
computations are uniformly derived from a few algebraic properties.
The algebra we need to consider is the one of the quantale $\LjI$ of
\jc functions from the unit interval to itself. This is a \saq, see
\cite{barr79}, and moreover it satisfies the mix rule, see
\cite{cockettSeely}.
The construction of the \cwo is actually an instance of a general
construction of a lattice $\Ld{Q}$ from a \saq $Q$ satisfying the mix
rule. When $Q = \two$ (the two-element Boolean algebra) this
construction yields the usual \wBo; when $Q = \LjI$, this construction
yields the \cwo. Thus, the step we took is actually an instance of
moving to a different set of (non-commutative, in our case) truth
values, as notably suggested in \cite{lawvere}.
What we found extremely surprising is that many deep geometric notions
(continuous monotone path, maximal chains, \ldots ) might be
characterised via this simple move and using the algebra of quantales.

Let us state our first main result.  Let
$\langle Q, 1,\otimes,\opp{}\rangle$ be a \saq (or a residuated
lattice), denote by $0$ and $\oplus$ the dual monoidal operations.
$Q$ is not supposed to be commutative, but we assume that it is cyclic
($x^{\star} = x\lrimpl 0 = 0 \rlimpl x$, for each $x \in Q$) and that
it satisfies the MIX rule ($x \otimes y \leq x \oplus y$, for each
$x,y \in Q$).  Let $d \geq 2$,
$\cd := \{\,(i,j) \mid 1\leq i < j \leq d\,\}$ and consider the
product $Q^{\cd}$. Say that a tuple $f \in Q^{\cd}$ is \emph{closed}
if $f_{i,j} \otimes f_{j,k} \leq f_{i,k}$, and that it is \emph{open}
if $f_{i,k} \leq f_{i,j} \oplus f_{j,k}$; say that $f$ is
\emph{clopen} if it is closed and open.
\begin{thm}
  The set of clopen tuples of $Q^{\cd}$, with the pointwise
  ordering, is a lattice.
\end{thm}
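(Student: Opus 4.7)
The approach is to view the clopen tuples as the common fixpoints of a closure operator $C$ (onto closed tuples) and an interior operator $I$ (onto open tuples) defined on the product $Q^{\cd}$, and to exploit MIX together with the linear distributivity laws of the \staraut quantale to show that $C$ and $I$ send clopen inputs to clopen outputs. Once this is in place, meets and joins of clopen tuples can be constructed by correcting pointwise meets with $I$ and pointwise joins with $C$.

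I would first observe that monotonicity of $\otimes$ makes the class of closed tuples closed under arbitrary pointwise meets, and dually that the class of open tuples is closed under arbitrary pointwise joins. Hence closed tuples form a complete lattice with an associated closure operator $C$, and open tuples form a complete lattice with an associated interior operator $I$. An explicit formula for the closure is the quantalic transitive closure
\[
C(f)_{i,k} \;=\; \bigvee \bigl\{\, f_{j_0,j_1} \otimes \cdots \otimes f_{j_{n-1},j_n} \,\bigm|\, i = j_0 < j_1 < \cdots < j_n = k \,\bigr\},
\]
and there is a dual formula for $I$ using chains and $\oplus$ in place of $\otimes$.

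The crucial step is to show that $C$ sends open tuples to open tuples and, dually, that $I$ sends closed tuples to closed tuples. Fix $f$ open and a chain $i = j_0 < \cdots < j_n = k$ together with an intermediate $j$, $i < j < k$; I have to prove
\[
f_{j_0,j_1} \otimes \cdots \otimes f_{j_{n-1},j_n} \;\leq\; C(f)_{i,j} \oplus C(f)_{j,k}.
\]
If $j = j_\ell$ for some $\ell$, the tensor splits into two factors bounded by $C(f)_{i,j}$ and $C(f)_{j,k}$ respectively, and MIX converts the connecting $\otimes$ into an $\oplus$. If $j$ lies strictly between $j_\ell$ and $j_{\ell+1}$, openness of $f$ replaces $f_{j_\ell, j_{\ell+1}}$ by $f_{j_\ell, j} \oplus f_{j, j_{\ell+1}}$, and then the linear distributivity laws $x \otimes (y \oplus z) \leq (x \otimes y) \oplus z$ and $(y \oplus z) \otimes x \leq y \oplus (z \otimes x)$, available in any \staraut structure, can be iterated to pull apart the mixed tensor, yielding the same bound.

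With this lemma, the lattice operations on clopen tuples are defined by $f \wedge g = I(f \wedge_{\mathrm{pt}} g)$ and $f \vee g = C(f \vee_{\mathrm{pt}} g)$: the pointwise meet of clopen tuples is closed and the pointwise join is open, so the lemma ensures that both $I(f \wedge_{\mathrm{pt}} g)$ and $C(f \vee_{\mathrm{pt}} g)$ are clopen, and universality follows formally from the closure/interior properties of $C$ and $I$ applied to any clopen bound. The main obstacle is the crucial step above, namely coordinating MIX with linear distributivity when the intermediate index $j$ does not appear in the chain witnessing a given summand of $C(f)_{i,k}$; everything else is a routine assembly.
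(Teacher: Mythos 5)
Your proposal is correct and is essentially the paper's own argument, read through the self-duality $f \mapsto \opp{f}$: the paper proves that the interior of a closed tuple is closed, while you prove the dual statement that the closure of an open tuple is open, and the technical core is identical in both cases --- a case split on whether the intermediate index $j$ occurs in the subdivision, with MIX handling the coincidence case and the linear distributivity law $(\alpha \oplus \beta) \otimes (\gamma \oplus \delta) \leq \alpha \oplus (\beta \otimes \gamma) \oplus \delta$ combined with openness (resp.\ closedness) handling the strict case. The final assembly of joins and meets via $C(\bigvee_{\mathrm{pt}})$ and $I(\bigwedge_{\mathrm{pt}})$ is exactly the paper's Proposition~\ref{prop:meetsAndJoins}.
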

The above lattice is the one we denoted $\Ld{Q}$. The second main
result we aim to present relates the algebraic setting to the analytic
one:
\begin{thm}
  Clopen tuples of $\LjI^{\cd}$ bijectively correspond to images of
  monotonically increasing continuous functions $\p : \I \rto \I^{d}$
  such that $\p(0) = \vec{0}$ and $\p(1) = \vec{1}$.
\end{thm}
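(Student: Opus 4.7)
The plan is to construct explicit maps in both directions and verify that they are mutually inverse. For the forward direction, given a continuous monotone path $\p : \I \rto \I^d$ with $\p(0) = \vec{0}$ and $\p(1) = \vec{1}$, I would associate the tuple $(f_{i,j})_{(i,j) \in \cd}$ defined by
$$f_{i,j}(t) := \sup\{\,\p_j(s) \mid s \in \I,\ \p_i(s) \leq t\,\}.$$
Each $f_{i,j}$ is join-continuous (immediate from monotonicity of $\p$ and the observation that the defining set of $s$'s grows monotonically with $t$), so lies in $\LjI$, and the resulting tuple depends only on the image of $\p$, since reparametrization preserves the planar set $\{(\p_i(s),\p_j(s)) \mid s \in \I\}$. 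The clopen conditions then translate into geometric statements about $\p$: the closed inequality $f_{i,j} \otimes f_{j,k} \leq f_{i,k}$ says that composing pairwise upper envelopes underestimates the direct envelope, which follows from monotonicity alone; the open inequality $f_{i,k} \leq f_{i,j} \oplus f_{j,k}$ is where continuity of $\p$ is used, through the intermediate value theorem applied to $\p_j$ to split any relevant $s$ at an intermediate $\p_j$-value, and the MIX rule available in $\LjI$ is precisely what packages this continuity phenomenon as an algebraic inequality.

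For the inverse, starting from a clopen tuple $(f_{i,j})$, I would form the candidate image
$$I := \{\,\mathbf{y} \in \I^d \mid y_j \leq f_{i,j}(y_i)\ \text{and the dual inequalities hold, for all}\ i < j\,\},$$
where the dual inequalities, obtained from $f_{i,j}^{\star}$ in the star-autonomous structure of $\LjI$, ensure that axis-parallel segments filling in jump discontinuities of the $f_{i,j}$'s are included in $I$. I would then show that $I$ is a maximal chain in the componentwise order on $\I^d$, with minimum $\vec{0}$ and maximum $\vec{1}$, and that it is closed in $\I^d$. From this a continuous monotone parametrization $\p : \I \rto \I^d$ with image exactly $I$ can be extracted, for instance by composing a continuous monotone projection $I \rto \I$ (such as summing coordinates, or an arc-length parametrization) with its continuous inverse.

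The main obstacle will be the inverse construction, specifically showing that $I$ is genuinely a one-dimensional continuous curve and neither a higher-dimensional region nor a disconnected set. Each $f_{i,j} \in \LjI$ may have countably many one-sided jump discontinuities, and at each such jump the image must contain a full axis-parallel segment; these fillings across all $\binom{d}{2}$ pairs must assemble coherently into a single curve. The MIX rule, which allows the closed and open inequalities to be imposed simultaneously on nontrivial tuples rather than collapsing them to an equation, is precisely what makes this coherent assembly possible. Once $I$ is known to be a monotone curve, checking that the forward and inverse maps are mutually inverse reduces to routine unwinding of the definitions above.
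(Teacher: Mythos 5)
Your overall strategy coincides with the paper's: extract a tuple of envelope functions from (the image of) a path, recover the path from a clopen tuple as the set of points lying between the envelopes, show that this set is a maximal chain, and invoke the equivalence between maximal chains and images of continuous monotone maps hitting $\vec{0}$ and $\vec{1}$. Two concrete problems remain. First, your forward formula $f_{i,j}(t) := \sup\{\,\p_j(s) \mid \p_i(s) \leq t\,\}$ does not produce an element of $\LjI$: since the image is a chain on which $\pi_i$ is surjective, it equals the \emph{upper} envelope $\bigvee\{\,\p_j(s) \mid \p_i(s)=t\,\}$, which is meet-continuous but in general not join-continuous. For instance, if the path begins with a segment along which $\p_i$ stays at $0$ while $\p_j$ climbs to $1/2$, then $f_{i,j}(0)=1/2\neq 0=\bigvee\emptyset$, so $f_{i,j}\notin\LjI$. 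The paper instead takes the lower envelope $v(C)_{i,j}=\pi_j\circ\ladj{(\pi_i)}$, i.e.\ $\bigwedge\{\,\p_j(s)\mid\p_i(s)=t\,\}$; your formula works only after replacing $\leq t$ by $<t$ (or after systematically moving to $\LmI$ and dualizing the closed/open conditions). The adjoint presentation is worth adopting in any case, since it reduces the clopen-ness of the tuple to the counit inequality $\ladj{(\pi_j)}\circ\pi_j\leq \mathrm{id}$ rather than an appeal to the intermediate value theorem.

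Second, the steps you defer as routine are where the real work lies, and your attribution of the coherence of the inverse construction to the MIX rule is off target: in $\LjI$ the MIX rule amounts to $\mathrm{id}\leq\mathrm{id}$, and it is used to prove that clopen tuples form a \emph{lattice}, not to prove the bijection. What makes your set $I$ a dense complete chain is the compatibility condition $f_{j,k}\circ f_{i,j}\leq f_{i,k}$ extended to \emph{all} orderings of indices via $f_{j,i}:=\opp{(f_{i,j})}$ and $f_{i,i}:=\mathrm{id}$ (Lemma~\ref{lemma:enrichment} shows this is equivalent to clopen-ness); totality, closure under meets and joins, and density each need a separate argument from these inequalities. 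Moreover, the identity $v(C_f)=f$ requires exhibiting $x_0\mapsto (f_{i_0,i}(x_0))_{i}$ as the left adjoint of $\pi_{i_0}:C_f\rto\I$ (Lemma~\ref{lemma:Crightadj}), and $C_{v(C)}=C$ uses maximality of $C$. None of this is automatic, so as it stands the proposal is the right plan, but with one incorrect claim and the substantive verifications still to be supplied.
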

The results presented in this paper undoubtedly have a mathematical
nature, yet our motivations for developing these ideas originate from
various researches in computer science that we recall next.

\emph{Directed homotopy} \cite{goubault2003} was developed to
understand behavioural equivalence of concurrent
processes. Monotonically increasing paths might be seen as behaviours
of distributed processes whose local state variable can only
increase. The relationship between directed homotopies and lattice
congruences (in lattices of lattice paths) was already pinpointed in
\cite{ORDER-24-3}. In that paper we did not push further these ideas,
mainly because the mathematical theory of a \cwo was not yet
available.

In \emph{discrete geometry} discrete paths (that is, words on the
alphabet $\set{x,y,\ldots }$) are used to approximate continuous
lines. In dimension $2$, Christoffel words \cite{reutenauer} are
well-established approximations of a straight segment from $(0,0)$ to
some point $(n,m)$.  The lattice theoretic nature of this kind of
approximation is apparent from the fact that Christoffel words can
equivalently be defined as images of the identity/diagonal via the
right/left adjoints to the canonical embedding of the binomial lattice
$\L(n,m)$ into the lattice $\LjI$.  For higher dimensions, there are
multiple proposals on how to approximate a straight segment, see for
example \cite{ANDRES2003,FeschetR06,BertheLabbe11,ProvencalVuillon}.
It is therefore tempting to give a lattice theoretic notion of
approximation by replacing the binomial lattices with the multinomial
lattices and the lattice $\LjI$ with the lattice $\LId$.
The structural theory of the lattices $\LId$ already identifies
difficulties in defining such a notion of approximation.  For
$d \geq 3$, the lattice $\LId$ is no longer the Dedekind-MacNeille
completion of the sublattice of discrete paths whose steps are on
rational points---this is the colimit of the canonical embeddings of the
multinomial lattices into $\LId$; defining approximations naively via
right/left adjoints of these canonical embeddings is bound to be
unsatisfying. This does not necessarily mean that we should discard
lattice theory as an approach to discrete geometry; for example, we
expect that notions of approximation that take into consideration the
degree of generation of $\LId$ from multinomial lattices will be more
robust.

\bigskip

The paper is organized as follows.  We recall in \secRef{notation} some
facts on \jcont (or \mcont) functions and adjoints.
\secRef{latticesFromQuantales} describes the construction of the
lattice $\Ld{Q}$, for an integer $d \geq 2$ a lattice and a \msaq $Q$.
In \secRef{quantaleLI} we show that the quantale $\LjI$ of continuous
functions from the unit interval to itself is a \msaq, thus giving
rise to a lattice $\Ld{\LjI}$ (we shall denote this lattice $\LId$,
to ease reading).
In the following sections we formally instantiate our geometrical
intuitions.  \secRef{paths} introduces the crucial notion of path and
discusses its equivalent characterizations.  In
\secRef{pathsDimensionTwo} we shows that paths in dimension $2$ are in
bijection with elements of the quantale $\LjI$.  In
\secRef{pathsDimensionMore} we argue that paths in higher dimensions
bijectively correspond to clopen tuples of the lattice $\LjI^{\cd}$.
In \secRef{structure} we discuss some structural properties of the
lattices $\LjI$. We add concluding remarks in the final section.

\section{Elementary facts on \jcont functions}
\label{sec:notation}

Throughout this paper, $\setof{d}$ shall denote the set
$\set{1,\ldots ,d}$ while we let
$\cd := \set{(i,j) \mid 1 \leq i < j \leq d}$.

Let $P$ and $Q$ be complete posets; a function $f : P \rTo Q$ is
\emph{\jcont} (\resp \emph{\mcont}) if
\begin{align}
  \label{eq:meetcontinuous}
  f(\bigvee X) & = \bigvee_{x \in X} f(x)\,, 
  \;\;\;\;(\text{\resp}\; f(\bigwedge X) = \bigwedge_{x \in X} f(x))\,,  
\end{align}
for every $X\subseteq P$ such that $\bigvee X$ (\resp $\bigwedge X$)
exists.  Recall that $\bot_{P} := \bigvee \emptyset$ (\resp
$\top_{P} := \bigwedge \emptyset$) is the least (\resp greatest)
element of $P$. Note that if $f$ is join-continuous (\resp \mcont)
then $f$ is monotone and $f(\bot_P)=\bot_Q$ (\resp
$f(\top_P)=\top_Q$).
Let $f$ be as above; a map $g : Q \rto P$ is \emph{\LADJ} to $f$ if
$g(q) \leq p$ holds if and only if $q \leq f(p)$ holds, for each
$p \in P$ and $q \in Q$; it is \emph{\RADJ} to $f$ if $f(p) \leq q$ is
equivalent to $p \leq g(q)$, for each $p \in P$ and $q \in Q$.  Notice
that there is at most one function $g$ that is \LADJ (\resp \RADJ) to
$f$; we write this relation by $g = \ladj{f}$ (\resp $g =
\radj{f}$). Clearly, when $f$ has a \RADJ, then
$f = \ladj{(\radj{g})}$, and a similar formula holds when $f$ has a
\LADJ.  We shall often use the following fact:
\begin{lemma}
  If $f : P \rto Q$ is monotone and $P$ and $Q$ are two complete
  posets, then the following are equivalent:
  \begin{enumerate}
  \item $f$ is \jcont (\resp \mcont),
  \item $f$ has a \RADJ (\resp \LADJ).
  \end{enumerate}
\end{lemma}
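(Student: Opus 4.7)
The plan is to prove the two equivalences separately, concentrating on the join-continuous/right-adjoint case; the meet-continuous/left-adjoint case then follows by duality, i.e.\ by applying the first case to $f$ viewed as a monotone map between the opposite posets $P^{\mathrm{op}}$ and $Q^{\mathrm{op}}$, which are again complete.

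For the implication (2) $\Rightarrow$ (1) in the join-continuous case, I would assume $g = \radj{f}$ and let $X \subseteq P$ be such that $\bigvee X$ exists. Monotonicity of $f$ already yields $\bigvee_{x \in X} f(x) \leq f(\bigvee X)$. For the reverse inequality, I would exploit the adjunction via the chain of equivalences
\begin{align*}
  f(\bigvee X) \leq q \;\iff\; \bigvee X \leq g(q) \;\iff\; x \leq g(q) \text{ for all } x \in X \;\iff\; f(x) \leq q \text{ for all } x \in X\,,
\end{align*}
and then instantiate $q := \bigvee_{x \in X} f(x)$ to conclude $f(\bigvee X) \leq \bigvee_{x \in X} f(x)$.

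For the implication (1) $\Rightarrow$ (2), I would define, using completeness of $P$,
\begin{align*}
  g(q) \;:=\; \bigvee \set{p \in P \mid f(p) \leq q}\,,
\end{align*}
and verify that $g$ satisfies the defining property of $\radj{f}$. One direction is immediate: if $f(p) \leq q$, then $p$ belongs to the set on the right and hence $p \leq g(q)$. For the converse, this is where join-continuity is used crucially: applying $f$ to $g(q)$ and pushing $f$ inside the join gives
\begin{align*}
  f(g(q)) \;=\; \bigvee \set{f(p) \mid f(p) \leq q} \;\leq\; q\,,
\end{align*}
so that $p \leq g(q)$ entails $f(p) \leq f(g(q)) \leq q$ by monotonicity of $f$.

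There is no genuine obstacle; the only subtlety worth noting is that the set defining $g(q)$ is always nonempty since $f(\bot_P) = \bot_Q \leq q$ (indeed, join-continuity forces $f$ to preserve the empty join), and one must remember that an empty join, if present among the $X$'s in~(1), is handled exactly because $P$ is complete. The meet-continuous/left-adjoint equivalence is then obtained at once from the join-continuous/right-adjoint one by order reversal.
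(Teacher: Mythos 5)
Your proof is correct. The paper itself states this lemma without proof, as a standard fact about adjunctions between complete posets; your argument is the canonical one and matches exactly the explicit formula $\radj{f}(q) = \bigvee \set{p \in P \mid f(p) \leq q}$ that the paper records immediately after the lemma, with the duality reduction for the meet-continuous/left-adjoint case handled correctly.
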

If $f$ is \jcont (\resp \mcont), then we have
\begin{align*}
  \radj{f}(q) & = \bigvee \set{p \in P \mid f(p) \leq q} \qquad
  (\,\text{\resp} \;\;\ladj{f}(q) = \bigwedge \set{p \in P \mid q \leq
    f(p)}\,)\,, \tag*{for each $q \in Q$.}
\end{align*}
Moreover, if $f$ is surjective, then these formulas can be
strengthened so to substitute inclusions with equalities:
\begin{align}
  \label{eq:adjsurjective}
  \radj{f}(q) & = \bigvee \set{p \in P \mid  f(p) = q}
  \qquad (\,\text{\resp}
  \;\;\ladj{f}(q) = \bigwedge \set{p \in P \mid  q = f(p)}\,)\,, \\
  &\mbox{\hspace{80mm}}\tag*{for each $q \in Q$.}
\end{align}
The set of monotone functions from $P$ to $Q$ can be ordered
point-wise: $f \leq g$ if $f(p) \leq g(p)$, for each $p \in
P$. Suppose now that $f$ and $g$ both have \RADJ{s}; let us argue that
$f \leq g$ implies $\radj{g} \leq \radj{f}$: for each $q \in Q$, the
relation $\radj{g}(q) \leq \radj{f}(q)$ is obtained by transposing
$f(\radj{g}(q)) \leq g(\radj{g}(q)) \leq q$, where the inclusion
$g(\radj{g}(q)) \leq q$ is the counit of the adjunction.  Similarly,
if $f$ and $g$ both have \LADJ{s}, then $f \leq g$ implies
$\ladj{g} \leq \ladj{f}$.

\section{Lattices from \msaq{s}}
\label{sec:latticesFromQuantales}

A \emph{\saq} is a tuple
$\qQ = \langle Q, 1, \otimes, 0,\oplus, \oppfun\rangle$ where $Q$ is
a complete lattice, $\otimes$ is a monoid operation on $Q$ that
distributes over arbitrary joins, $ \oppfun : Q^{op} \rto Q$ is an
order reversing involution of $Q$, and $(0,\oplus)$ is a second monoid
structure on $Q$ which is dual to $(1,\otimes)$. This means that
\begin{align*}
  0 & = \opp{1} \quad \tand \quad f \oplus g = \opp{(\opp{g} \otimes
    \opp{f})}\,.
\end{align*}
Last but not least, the following relation holds:
\begin{align*}
  f \otimes g & \leq h \quad \tiff\quad  f \leq h\oplus \opp{g}\quad  \tiff\quad  g \leq
  \opp{f}\oplus h\,.
\end{align*}
Let us mention that we could have also defined a \saq as a residuated
(bounded) lattice
$\langle Q,\bot,\vee,\top,\land,1,\otimes,\lrimpl,\rlimpl\rangle$ such
that $Q$ is complete and comes with a cyclic dualizing element
$0$. The latter condition means that, for each $x \in Q$,
$x \lrimpl 0 = 0 \rlimpl x$ and, letting $\opp{x} := x \lrimpl 0$,
$\oppopp{x} = x$.  This sort of algebraic structure is also called
\emph{(pseudo) \staraut lattice} or \emph{involutive residuated
  lattice}, see e.g. \cite{Paoli2005,Tsinakis2006,Emanovsky2008}.

\begin{example}
  \label{ex:Sugihara}
  Boolean algebras are the \saq{s} such that $\wedge = \otimes$ and
  $\vee = \oplus$. 
  For a further example
  consider the following structure on the
  ordered set $\set{-1 < 0 <1}$:
  \begin{align*}
    \begin{array}{r@{\;\;}|@{\;\;}rrr}
      \otimes&-1&0&1\\\hline
      -1 &-1&-1&-1\\
      0 &-1&0&1\\
      1 &-1&1&1
    \end{array}
    \qquad
    \begin{array}{r@{\;\;}|@{\;\;}rr@{\;\;\,}r}
      \oplus&-1&0&1\\\hline
      -1 &-1&-1&1\\
      0 &-1&0&1\\
      1 &1&1&1
    \end{array}
    \qquad
    \begin{array}{r@{\;\;}|@{\;\;}r}
       &\star\\\hline
      -1 &1\\
      0 &0\\
      1 &-1
    \end{array}
  \end{align*}
  Together with the lattice structure on the chain, this structure
  yields a \saq, known as the Sugihara monoid on the three-element
  chain, see e.g. \cite{GALATOS20122177}.
\end{example}

We presented in \cite{STA} several ways on how to generalize the
standard construction of the \Permutohedron (aka the weak Bruhat
order). We give here a new one.
Given a \saq $Q$, we consider the product
$\prodd{Q} := \prod_{1 \leq i < j \leq d} Q$. Observe that, as a
product, $\prodd{Q}$ has itself the structure of a quantale, the
structure being computed coordinate-wise.  We shall say that a tuple
$f = \langle f_{i,j} \mid 1 \leq i < j \leq d\rangle$ is \emph{closed}
(\resp \emph{open}) if
\begin{align*}
  f_{i,j} \otimes f_{j,k} & \leq f_{i,k}
  \qquad
  (\resp   \,f_{i,k} \leq f_{i,j} \oplus f_{j,k}\,)
  \,.
\end{align*}
Clearly, closed tuples are closed under arbitrary meets and open
tuples are closed under arbitrary joins.
Observe that $f$ is closed if and only if
$\opp{f} = \langle \opp{(f_{\sigma(j),\sigma(i)})} \mid 1 \leq i < j
\leq d\rangle$ is open, where for $i \in [d]$,
$\sigma(i) = d - i + 1$. Thus, the correspondence sending $f$ to
$\opp{f}$ is an anti-isomorphism of $\PrdQ$, sending closed tuples to
open ones, and vice versa.  We shall be interested in tuples
$f \in \PrdQ$ that are \emph{clopen}, that is, they are at the same
time closed and open.

For $(i,j) \in \cd$, a subdivision of the interval $[i,j]$ is a
sequence of the form
$i = \ell_{0} < \ell_{1} < \ldots \ell_{k -1}< \ell_{k} = j$ with
$\ell_{i} \in [d]$, for $i = 1,\ldots ,k$.
We shall use $S_{i,j}$ for the set of subdivisions of the interval
$[i,j]$.
As closed tuples are closed under arbitrary meets, for each
$f \in \prodd{Q}$ there exists a least tuple $\closure{f}$ such that
$f \leq \closure{f}$ and $\closure{f}$ is closed; this tuple is easily
computed as follows:
\begin{align*}
  \closure{f}_{i,j}
  & = \bigvee_{i < \ell_{1} < \ldots \ell_{k -1}< j \in S_{i,j}}
    f_{i,\ell_{1}}\otimes f_{\ell_{1},\ell_{2}}\otimes \ldots \otimes f_{\ell_{k-1},j}\,.
\end{align*}
Similarly and dually, if we set
\begin{align*}
  \interior{f}_{i,j} & := \bigwedge_{i < \ell_{1} < \ldots \ell_{k
      -1}< j \in S_{i,j}} f_{i,\ell_{1}}\oplus
  f_{\ell_{1},\ell_{2}}\oplus \ldots \oplus
  f_{\ell_{k-1},j}\,.
\end{align*}
then 
$\interior{f}$ is the greatest open tuple $\interior{f}$ below $f$.

\begin{proposition}
  \label{prop:meetsAndJoins}
  Suppose that, for each $f \in \PrdQ$,
  $\cl{\int{(\cl{f})}} = \int{(\cl{f})}$. Then, for each
  $f \in \PrdQ$, $\int{(\cl{\int{f}})} = (\cl{\int{f}})$ as well. The
  set of clopen tuples is then a lattice.
\end{proposition}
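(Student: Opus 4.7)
The plan has two movements: first, derive the dual of the hypothesis using the anti-isomorphism $f \mapsto \opp{f}$ on $\PrdQ$; second, use the closure and interior operators to construct meets and joins of clopen tuples.

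I would begin by establishing that the anti-isomorphism $f \mapsto \opp{f}$ conjugates closure with interior, i.e.\ $\opp{(\cl{f})} = \int{(\opp{f})}$ and dually $\opp{(\int{f})} = \cl{(\opp{f})}$. This is the De Morgan--style computation one expects: use the explicit formulas for $\cl{f}_{i,j}$ and $\int{f}_{i,j}$ as joins/meets indexed by subdivisions of $[i,j]$, the identity $\opp{(a \otimes b)} = \opp{b} \oplus \opp{a}$, and the reindexing of subdivisions via $\sigma(i) = d - i + 1$. Granted these identities, applying the hypothesis to $\opp{f}$ and transporting through $\opp{\cdot}$ yields $\int{(\cl{\int{f}})} = \cl{\int{f}}$. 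Conceptually, the hypothesis says that the interior of a closed tuple is itself closed, hence clopen; the dual conclusion is that the closure of an open tuple is itself open, hence clopen.

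For the lattice structure, given a family $\{f_i\}_{i \in I}$ of clopen tuples, I would define
\[
  \bigwedge^{\mathsf{cp}}_{i} f_i \;:=\; \int{\bigl(\,\bigwedge_{i} f_i\,\bigr)}\,,
  \qquad
  \bigvee^{\mathsf{cp}}_{i} f_i \;:=\; \cl{\bigl(\,\bigvee_{i} f_i\,\bigr)}\,.
\]
Since closed tuples are closed under arbitrary meets in $\PrdQ$, $\bigwedge_i f_i$ is closed, so by the hypothesis its interior is clopen; dually, $\bigvee_i f_i$ is open and so its closure is clopen by the statement derived in the first movement. To verify that $\int{(\bigwedge_i f_i)}$ is the greatest lower bound of $\{f_i\}$ among clopen tuples I would invoke the adjunction defining $\int{\cdot}$, namely that for any open $g$ and any $h$, $g \leq h$ iff $g \leq \int{h}$. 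A clopen $g$ thus lies below every $f_i$ iff $g \leq \bigwedge_i f_i$ iff $g \leq \int{(\bigwedge_i f_i)}$; the argument for joins is the mirror image.

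The only delicate point is the bookkeeping needed to verify $\opp{(\cl{f})} = \int{(\opp{f})}$ explicitly, since the anti-isomorphism both relabels indices via $\sigma$ and reverses order, and one must juggle these with the De Morgan swap between $\otimes$ and $\oplus$. Once this bookkeeping is done, the dualization of the hypothesis and the construction of meets and joins of clopen tuples reduce to elementary order-theoretic adjunction arguments.
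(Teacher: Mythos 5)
Your proposal is correct and follows essentially the same route as the paper: the first claim is obtained by transporting the hypothesis through the anti-isomorphism $f \mapsto \opp{f}$ (which exchanges closure and interior), and meets and joins of clopen tuples are defined as the interior of the pointwise meet and the closure of the pointwise join, with the universal properties checked via the adjunctions defining $\cl{\cdot}$ and $\int{\cdot}$. The only difference is that you propose to verify the conjugation identity $\opp{(\cl{f})} = \int{(\opp{f})}$ by explicit formula bookkeeping, whereas it follows directly from the fact that the anti-isomorphism swaps closed and open tuples, so that the least closed tuple above $f$ corresponds to the greatest open tuple below $\opp{f}$.
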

\begin{proof}
  The first statement is a consequence of the duality sending
  $f \in \PrdQ$ to $\opp{f} \in \PrdQ$. Now the relation
  $\cl{\int{(\cl{f})}} = \int{(\cl{f})}$ amounts to saying that the
  interior of any closed $f$ is again closed. The other relation
  amounts to saying that the closure of an open is open.  For a family
  $\set{f_{i} \mid i \in I}$, with each $f_{i}$ clopen, define then
  \begin{align*}
    \bigvee_{\Ld{Q}} \set{ f_{i} \mid i \in I } & :=
    \cl{\bigvee_{\PrdQ} \set{ f_{i} \mid i \in I } }\,, & 
    \bigwedge_{\Ld{Q}} \set{ f_{i} \mid i \in I } & :=
    \int{(\bigwedge_{\PrdQ} \set{ f_{i} \mid i \in I } )}\,,
  \end{align*}
  and remark that, by our assumptions, the expressions on the right of
  the equalities denote clopen tuples.  It is easily verified then
  these are the joins and meets, respectively, among clopen tuples. 
  \qed
\end{proof}
\begin{lemma}
  \label{lemma:Mixequivalent}
  Consider the following inequalities:
  \begin{align}
      (\alpha \oplus \beta) \otimes (\gamma \oplus \delta) &\leq
      \alpha \oplus (\beta \otimes \gamma) \oplus \delta
      \label{eq:distr} \\
      \label{eq:mix}
      \alpha \otimes \beta & \leq \alpha \oplus \beta\,.
    \end{align}
    Then \eqref{eq:distr} is valid and \eqref{eq:mix} is equivalent
    to $0 \leq 1$. 
\end{lemma}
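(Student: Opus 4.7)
The plan is to first establish the two \emph{weak} (or \emph{linear}) distributivity inequalities
\[
(\alpha \oplus \beta) \otimes \gamma \leq \alpha \oplus (\beta \otimes \gamma)
\qquad \text{and} \qquad
\alpha \otimes (\beta \oplus \gamma) \leq (\alpha \otimes \beta) \oplus \gamma\,,
\]
which are well known to hold in any cyclic \saq. For the first of these, the adjunction $f \otimes g \leq h$ iff $f \leq h \oplus \opp{g}$ (unambiguous thanks to cyclicity) reduces the desired inequality to $\alpha \oplus \beta \leq \alpha \oplus (\beta \otimes \gamma) \oplus \opp{\gamma}$, and this follows, by monotonicity of $\oplus$, from $\beta \leq (\beta \otimes \gamma) \oplus \opp{\gamma}$, which in turn is the transpose of the identity $\beta \otimes \gamma \leq \beta \otimes \gamma$. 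The second weak distributivity is symmetric.

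With these in place, \eqref{eq:distr} follows by two successive applications:
\[
(\alpha \oplus \beta) \otimes (\gamma \oplus \delta) \leq \alpha \oplus \bigl(\beta \otimes (\gamma \oplus \delta)\bigr) \leq \alpha \oplus (\beta \otimes \gamma) \oplus \delta\,.
\]

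For the equivalence of \eqref{eq:mix} with $0 \leq 1$: the forward direction is immediate by specializing \eqref{eq:mix} to $\alpha = 0$ and $\beta = 1$; since $1$ is the unit of $\otimes$ and $0$ the unit of $\oplus$, this instance becomes $0 = 0 \otimes 1 \leq 0 \oplus 1 = 1$. For the converse, assuming $0 \leq 1$, I would rewrite $\alpha \otimes \beta = (\alpha \oplus 0) \otimes (0 \oplus \beta)$, apply the already-proved \eqref{eq:distr} to obtain $\alpha \otimes \beta \leq \alpha \oplus (0 \otimes 0) \oplus \beta$, and then use $0 \leq 1$ together with monotonicity of $\otimes$ to derive $0 \otimes 0 \leq 1 \otimes 0 = 0$, so that the right-hand side collapses to $\alpha \oplus \beta$.

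The only step requiring care is the very first one, namely extracting the weak distributivities via the correct sequence of adjunction transpositions and keeping track of how the involution interacts with the two monoidal structures. Once these are on the table, the remainder is a short chain of substitutions and monotonicity moves, and the whole proof should fit comfortably within a few display lines.
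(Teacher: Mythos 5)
Your proof is correct. The equivalence of \eqref{eq:mix} with $0 \leq 1$ is handled exactly as in the paper: specialize to the units for one direction, and for the other set $\beta = \gamma = 0$ in \eqref{eq:distr} and use $0 \otimes 0 \leq 0$. Where you differ is in the derivation of \eqref{eq:distr}: you factor it through the two one-sided linear distributivity laws $(\alpha \oplus \beta) \otimes \gamma \leq \alpha \oplus (\beta \otimes \gamma)$ and $\alpha \otimes (\beta \oplus \gamma) \leq (\alpha \otimes \beta) \oplus \gamma$, each obtained by a single transposition of a reflexivity instance, and then chain them. The paper instead rewrites $\alpha \oplus \beta = \opp{\alpha} \lrimpl \beta$ and $\gamma \oplus \delta = \gamma \rlimpl \opp{\delta}$, sandwiches the product between $\opp{\alpha}$ and $\opp{\delta}$, applies the counits to land at $\beta \otimes \gamma$, and transposes back in one symmetric step. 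The two arguments are the same residuation idea organized differently; yours makes the connection to linearly distributive structure explicit and isolates two reusable lemmas, while the paper's is a single self-contained computation. Both your reductions check out (the adjunction $f \otimes g \leq h$ iff $f \leq h \oplus \opp{g}$ is part of the definition of a \saq{} here, and monotonicity of $\oplus$ follows from its De Morgan definition), so there is no gap.
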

\begin{toappendix}
  \begin{proof}
    Considering that
    $\alpha \oplus \beta = \oppopp{\alpha} \oplus \beta = \opp{\alpha}
    \lrimpl \beta$ and, similalry,
    $\gamma \oplus \delta = \gamma \rlimpl \opp{\delta}$, we derive
    \begin{align*}
      \opp{\alpha} \otimes (\alpha \oplus \beta) \otimes (\gamma
      \oplus \delta) \otimes \opp{\delta} & = \opp{\alpha} \otimes
      (\opp{\alpha} \lrimpl \beta) \otimes (\gamma \rlimpl
      \opp{\delta}) \otimes \opp{\delta} \leq \beta \otimes \gamma \,,
    \end{align*}
    uning the units of the residuated structure. Yet, the inequality
    so deduced is equivalent to \eqref{eq:distr} by transposing.

    If \eqref{eq:mix} holds, then $0 \leq 1$ is derived by
    instatiating in this equation both $\alpha$ to $0$ and $\beta$ to
    $1$.
    Conversely, suppose that $0 \leq 1$ and observe then that
    $0 \otimes 0 \leq 0 \otimes 1 = 0$.  Letting $\beta = \gamma = 0$
    in \eqref{eq:distr}, we derive \eqref{eq:mix} as follows:
    \begin{align*}
      \alpha \otimes \delta & = (\alpha \oplus 0) \otimes (0 \oplus
      \delta) \leq \alpha \oplus (0 \otimes 0) \oplus \delta \leq
      \alpha \oplus 0 \oplus \delta = \alpha \oplus \delta\,.
      \tag*{\qed}
    \end{align*}
  \end{proof}
\end{toappendix}

The inequation \eqref{eq:mix} is known as the \emph{mix rule}.  We say
that a \saq $\qQ$ is a \msaq if the mix rule holds in $\qQ$.
\begin{theorem}
  If $Q$ is a \msaq and $f \in \PrLQ$ is closed, then so is
  $\interior{f}$.  Consequently, the set of clopen tuples of $\PrLQ$
  is a lattice.
\end{theorem}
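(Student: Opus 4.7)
The plan is to verify closedness of $\interior{f}$ coordinate-wise and then invoke Proposition \ref{prop:meetsAndJoins} to conclude the lattice structure. Fix indices $i < j < k$; by the very definition of $\interior{f}_{i,k}$ as a meet over subdivisions, the desired inequality $\interior{f}_{i,j} \otimes \interior{f}_{j,k} \leq \interior{f}_{i,k}$ reduces to showing, for every subdivision $T: i = \ell_0 < \ell_1 < \ldots < \ell_m = k$ of $[i,k]$, the bound
\begin{align*}
\interior{f}_{i,j} \otimes \interior{f}_{j,k} \leq f_{\ell_0,\ell_1} \oplus f_{\ell_1,\ell_2} \oplus \ldots \oplus f_{\ell_{m-1},\ell_m}.
\end{align*}
I would split according to whether $j$ appears in $T$.

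In the easy case $j = \ell_p$, the subdivision $T$ restricts to a subdivision $T_1$ of $[i,j]$ and a subdivision $T_2$ of $[j,k]$ with $\bigoplus_T f = \bigoplus_{T_1} f \oplus \bigoplus_{T_2} f$. Using $\interior{f}_{i,j} \leq \bigoplus_{T_1} f$, $\interior{f}_{j,k} \leq \bigoplus_{T_2} f$, and the mix rule \eqref{eq:mix}, one obtains
\begin{align*}
\interior{f}_{i,j} \otimes \interior{f}_{j,k} \leq \bigoplus_{T_1} f \otimes \bigoplus_{T_2} f \leq \bigoplus_{T_1} f \oplus \bigoplus_{T_2} f = \bigoplus_T f.
\end{align*}

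The main obstacle is the remaining case $\ell_{p-1} < j < \ell_p$, where $j$ does not appear in $T$ and the mix rule alone will not close the gap. Here I would instead bound the two factors using the specific subdivisions $T_1 = (\ell_0 < \ldots < \ell_{p-1} < j)$ of $[i,j]$ and $T_2 = (j < \ell_p < \ldots < \ell_m)$ of $[j,k]$. Setting $\alpha := f_{\ell_0,\ell_1} \oplus \ldots \oplus f_{\ell_{p-2},\ell_{p-1}}$ and $\delta := f_{\ell_p,\ell_{p+1}} \oplus \ldots \oplus f_{\ell_{m-1},\ell_m}$, this gives
\begin{align*}
\interior{f}_{i,j} \otimes \interior{f}_{j,k} \leq (\alpha \oplus f_{\ell_{p-1},j}) \otimes (f_{j,\ell_p} \oplus \delta).
\end{align*}
The inequality \eqref{eq:distr} of Lemma \ref{lemma:Mixequivalent}, applied with $\beta = f_{\ell_{p-1},j}$ and $\gamma = f_{j,\ell_p}$, collapses the right-hand side to $\alpha \oplus (f_{\ell_{p-1},j} \otimes f_{j,\ell_p}) \oplus \delta$, and the closedness hypothesis on $f$ then replaces the middle $\otimes$-term by $f_{\ell_{p-1},\ell_p}$, yielding exactly $\bigoplus_T f$.

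Taking the meet over all $T$ establishes that $\interior{f}$ is closed, and Proposition \ref{prop:meetsAndJoins} supplies the lattice structure on clopen tuples. The only real art in the argument is the choice of the subdivisions $T_1,T_2$ in the second case: one must insert $j$ exactly once, straddling the gap $(\ell_{p-1},\ell_p)$, so that \eqref{eq:distr} manufactures a pair $f_{\ell_{p-1},j} \otimes f_{j,\ell_p}$ that the closedness of $f$ is in a position to collapse back to $f_{\ell_{p-1},\ell_p}$.
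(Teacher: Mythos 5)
Your proposal is correct and follows essentially the same route as the paper: reduce to a bound for each subdivision of $[i,k]$, insert $j$ into the subdivision, and combine the inequality \eqref{eq:distr} with the closedness of $f$ to collapse $f_{\ell_{p-1},j}\otimes f_{j,\ell_p}$ to $f_{\ell_{p-1},\ell_p}$. The only (cosmetic) difference is that you treat the case where $j$ already occurs in the subdivision separately via the bare mix rule, whereas the paper folds it into the same computation by the convention $f_{\ell_u,j}:=0$ when $j=\ell_u$.
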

\begin{proof}
  Let $i,j,k \in [d]$ with $i < j < k$.  We need to show that
  \begin{align*}
    \interior{f}_{i,j} \otimes \interior{f}_{j,k} & \leq
    f_{i,\ell_{1}}\oplus \ldots \oplus f_{\ell_{n-1},k}
  \end{align*}
  whenever $i < \ell_{1} < \ldots \ell_{n -1}< k \in S_{i,k}$.  This
  is achieved as follows. Let $u \in \set{0,1,\ldots ,n-1}$ such that
  $j \in [\ell_{u}, \ell_{u +1})$ and put
  \begin{align*}
    \alpha & := f_{i,\ell_{1}} \oplus \ldots \oplus f_{\ell_{u}}\,, &
    \delta & := f_{\ell_{u+1}} \oplus \ldots \oplus f_{\ell_{n-1},k} \, \\
    \beta & := f_{\ell_{u},j} \, &
    \gamma & := f_{j,\ell_{u +1}}\,.
  \end{align*}
  We let in the above definition $f_{\ell_{u},j} := 0$ when
  $j = \ell_{u}$.  Then
  \begin{align*}
    \int{f}_{i,j} \otimes \int{f}_{j,k} & \leq (\alpha \oplus \beta)
    \otimes (\gamma \oplus \delta)\,, \tag*{by definition of
      $ \int{f}_{i,j}$ and $\int{f}_{j,k}$,}
    \\
    & \leq \alpha \oplus (\beta \otimes \gamma) \oplus \delta\,,
    \tag*{by the inequation~\eqref{eq:distr},}
    \\
    & \leq \alpha \oplus f_{\ell_{u},\ell_{u+1}} \oplus \delta\,,
    \tag*{since $f$ is closed,} \intertext{(or, when $j = \ell_{u}$,
      by using $\beta = 0 \leq 1$ and
      $\gamma = f_{\ell_{u},\ell_{u+1}}$)} & = f_{i,\ell_{1}}\oplus
    \ldots \oplus f_{\ell_{n-1},k}\,.
  \end{align*}
  The last statement of the theorem is an immediate consequence of
  Proposition~\ref{prop:meetsAndJoins}.
  \qed
\end{proof}

\begin{definition}
  For $\qQ$ a \msaq, $\Ld{\qQ}$ shall denote the lattice of clopen tuples
  of $\PrLQ$.
\end{definition}

\begin{example}
  Suppose $Q = \two$, the Boolean algebra with two elements
  $0,1$. Identify a tuple $\chi \in \prodd{\two}$ with the
  characteristic map of a subset $S_{\chi}$ of
  $\set{(i,j) \mid 1 \leq i < j \leq d}$. Think of this subset as a
  relation. Then $\chi$ is clopen if both $S_{\chi}$ and its
  complement in $\set{(i,j) \mid 1 \leq i < j \leq d}$ are transitive
  relations. These subsets are in bijection with permutations of the
  set $[d]$, see \cite{STA0}; the lattice $\Ld{\two}$ is therefore
  isomorphic to the well-known \Permutohedron, aka the \wBo.
  On the other hand, if $Q$ is the Sugihara monoid on the
  three-element chain described in Example~\ref{ex:Sugihara}, then the
  lattice of clopen tuples is isomorphic to the lattice of
  pseudo-permutations, see \cite{KLNPS01,STA}.
\end{example}
\begin{remark}
  For a fixed integer $d$ the definition of the lattice $\Ld{\qQ}$
  relies only on the algebraic structure of $Q$. This allows to say
  that the construction $\Ld{\,-\,}$ is functorial: if
  $f : \qQ_{0} \rto \qQ_{1}$ is a \saq homomorphism, then we shall
  have a lattice homomorphism
  $\Ld{f} : \Ld{\qQ_{0}} \rto \Ld{\qQ_{1}}$ (it might be also argued
  that if $f$ is injective, then so is $\Ld{f}$).  It also means that
  we can interpret the theories of the lattices $\Ld{\qQ}$ in the
  theory of the quantale $Q$. For example, if the equational theory of
  a quantale $\qQ$ is decidable, then 
  the equational theory of the lattice $\Ld{\qQ}$ is decidable as
  well.
\end{remark}

\section{The \msaq $\LjI$}
\label{sec:quantaleLI}

In this paper $\I$ shall denote the unit interval of the reals, that
is $\I := [0,1]$. We use $\LjI$ for the set of \jcont functions from
$\I$ to itself.  Notice that a monotone function $f : \I \rTo \I$ is
\jcont if and only if
\begin{align}
  \label{eq:joincontinuous}
  f(x) & = \bigvee_{y < x, \,y \in \I \cap \Q} f(y)\,, 
\end{align}
see Proposition~2.1, Chapter II of \cite{compendiumCL}.  As the
category of complete lattices and \jcont functions is a symmetric
monoidal closed category, for every complete lattice $L$ the set of
\jcont functions from $Q$ to itself is a monoid object in that
category, that is, a quantale, see
\cite{joyaltierney,rosenthal1990}. Thus, we have:
\begin{lemma}
  Composition induces a quantale structure on $\LjI$.
\end{lemma}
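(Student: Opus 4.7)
The plan is to verify directly the four conditions defining a quantale on $\LjI$: it is a complete lattice, composition is an associative monoid operation preserving $\LjI$, and composition distributes over arbitrary joins on both sides. Since the paper remarks that this follows from $\LjI$ being the endomorphism object of $\I$ in the symmetric monoidal closed category of complete lattices and \jcont maps, the point is mainly to record, for self-containedness, the elementary calculations corresponding to this abstract fact.

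First I would equip $\LjI$ with the pointwise order inherited from $\I^{\I}$, and show that pointwise joins of \jcont functions are again \jcont. Given a family $\set{f_{i}}_{i \in I}$ in $\LjI$ and $X \subseteq \I$ such that $\bigvee X$ exists in $\I$, the calculation
\begin{align*}
  \bigl(\bigvee_{i} f_{i}\bigr)\bigl(\bigvee X\bigr)
  & = \bigvee_{i} f_{i}\bigl(\bigvee X\bigr)
    = \bigvee_{i}\bigvee_{x \in X} f_{i}(x)
    = \bigvee_{x \in X} \bigvee_{i} f_{i}(x)
    = \bigvee_{x \in X} \bigl(\bigvee_{i} f_{i}\bigr)(x)
\end{align*}
uses the \jc of each $f_{i}$ and the fact that suprema commute in the complete lattice $\I$. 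This shows that $\LjI$ is closed under arbitrary pointwise joins, hence is a complete lattice.

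Next I would check that composition preserves $\LjI$: if $f,g \in \LjI$ then for every $X \subseteq \I$ with supremum, $(f \circ g)(\bigvee X) = f(\bigvee_{x \in X} g(x)) = \bigvee_{x \in X} f(g(x))$, so $f \circ g \in \LjI$; associativity and the identity law are inherited from set-theoretic function composition, with $\mathrm{id}_{\I}$ trivially \jc. It remains to verify the two distributivity laws of composition over arbitrary joins. For a family $\set{g_{i}}_{i \in I} \subseteq \LjI$ and $f \in \LjI$, the equality $f \circ \bigvee_{i} g_{i} = \bigvee_{i} (f \circ g_{i})$ is obtained pointwise by using \jc of $f$, while the equality $(\bigvee_{i} f_{i}) \circ g = \bigvee_{i} (f_{i} \circ g)$ follows directly from the pointwise description of joins in $\LjI$, since one need only evaluate both sides at $g(x)$.

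No step is really an obstacle: the only subtlety is the exchange of suprema used in the first verification, but this is immediate because $\I$ is a complete lattice, so pointwise joins of \jcont maps are \jcont. Once these four items are recorded, the structure $\langle \LjI,\mathrm{id}_{\I},\circ\rangle$ is a quantale in the standard sense.
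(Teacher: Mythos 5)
Your proof is correct and takes essentially the same route as the paper, which likewise verifies the two distributivity laws of composition over pointwise joins by direct evaluation at a point (using \jc of $f$ for one law and the pointwise definition of joins for the other). The extra details you record---that pointwise joins of \jcont maps are \jcont and that $\LjI$ is closed under composition---are exactly the facts the paper delegates to the abstract remark about the symmetric monoidal closed structure on complete lattices, so nothing is missing or different in substance.
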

\begin{toappendix}
  Straightforward verification:
  \begin{align*}
    (f \circ \bigvee_{i \in I} g_{i})(x)
    & = f((\bigvee_{i \in I} g_{i})(x))
    = f(\bigvee_{i \in I} g_{i}(x))
    \\
    &
    = \bigvee_{i \in I} f(g_{i}(x))
    = \bigvee_{i \in I} ((f \circ g_{i})(x))
    = (\bigvee_{i \in I} f \circ g_{i})(x)\,, \\
    ((\bigvee_{i \in I} f_{i}) \circ g) (x)
    & = (\bigvee_{i \in I} f_{i})(g(x))
    = \bigvee_{i \in I} f_{i}(g(x)) 
    = (\bigvee_{i \in I} (f_{i} \circ g))(x)\,.
    \tag*{\qed}
  \end{align*}
\end{toappendix}
Let now $\LmI$ denote the collection of \mcont functions from
$\I$ to itself.
By duality, we obtain:
\begin{lemma}
  Composition induces a dual quantale structure on $\LmI$.
\end{lemma}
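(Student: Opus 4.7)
The plan is to derive the dual quantale structure on $\LmI$ from the quantale structure on $\LjI$ already established, using the order-reversing involution $\sigma : \I \rto \I$ given by $\sigma(x) := 1 - x$. Since $\sigma$ is a lattice anti-isomorphism of $\I$, conjugation sends a join-continuous map to a meet-continuous one and conversely; hence the assignment $\Phi(f) := \sigma \circ f \circ \sigma$ defines a bijection $\Phi : \LjI \rto \LmI$ that is its own inverse.

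I would then check three properties of $\Phi$. First, $\Phi$ is order-reversing for the pointwise order: unfolding $\Phi(f) \leq \Phi(g)$ and applying $\sigma$ on both sides yields $g \leq f$, so arbitrary joins in $\LjI$ are transported to arbitrary meets in $\LmI$. Second, because $\sigma \circ \sigma = \id_{\I}$, the map $\Phi$ is multiplicative, $\Phi(f) \circ \Phi(g) = \sigma \circ f \circ g \circ \sigma = \Phi(f \circ g)$, and preserves the unit, $\Phi(\id_{\I}) = \id_{\I}$. Third, these two facts combined with the distributivity of composition over arbitrary joins in $\LjI$ (the content of the previous lemma) immediately yield that composition in $\LmI$ distributes over arbitrary meets on either side; together with associativity and the identity element, this is precisely what it means to be a dual quantale.

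I do not expect any serious obstacle. The single preliminary point is the verification that $\Phi$ actually lands in $\LmI$: for $f \in \LjI$ and $X \subseteq \I$, one uses that $\sigma$ converts meets to joins to compute
\begin{align*}
  (\sigma \circ f \circ \sigma)\bigl(\bigwedge X\bigr)
  &= \sigma\bigl(f\bigl(\bigvee_{x \in X} \sigma(x)\bigr)\bigr)
  = \sigma\bigl(\bigvee_{x \in X} f(\sigma(x))\bigr)
  = \bigwedge_{x \in X} (\sigma \circ f \circ \sigma)(x)\,.
\end{align*}
A completely elementary alternative, avoiding $\sigma$ altogether, would be to redo verbatim the two-line computation from the toappendix of the previous lemma with every $\bigvee$ replaced by $\bigwedge$, since meet-continuity of each factor makes the argument formally identical.
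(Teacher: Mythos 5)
Your argument is correct and is essentially the paper's: the paper simply invokes duality here, and your conjugation by the order-reversing involution $\sigma(x)=1-x$ is a concrete realization of that duality, transporting the quantale structure already established on $\LjI$ to a dual quantale structure on $\LmI$. Your fallback of repeating the distributivity computation verbatim with $\bigvee$ replaced by $\bigwedge$ is exactly the direct verification the paper leaves implicit.
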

With the next set of observations we shall see $\LjI$ and $\LmI$ are
order isomorphic. For a monotone function $f : \I \rto \I$, define
\begin{align*}
  \meetof{f}(x) & = \bigwedge_{x < x'} f(x')\,, &
  \joinof{f}(x) & = \bigvee_{x' < x} f(x')\,.
\end{align*}
\begin{lemma}
  If $x < y$,
  then $\meetof{f}(x) \leq \joinof{f}(y)$. 
\end{lemma}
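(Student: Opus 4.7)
The plan is to exploit density of the order on $\I$: between any two distinct points $x < y$ in $\I$ there exists an intermediate point $z$ with $x < z < y$. Once such a $z$ is picked, the inequality follows by two applications of the defining formulas for $\meetof{f}$ and $\joinof{f}$, together with monotonicity (which is automatic since $f$ is assumed monotone).

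More concretely, I would proceed as follows. First, fix $x,y\in\I$ with $x<y$ and choose any $z\in\I$ with $x<z<y$ (for instance $z=(x+y)/2$). Since $x<z$, the element $z$ belongs to the indexing set $\set{x'\in\I\mid x<x'}$ of the meet defining $\meetof{f}(x)$, so
\begin{align*}
  \meetof{f}(x) \;=\; \bigwedge_{x<x'} f(x') \;\leq\; f(z)\,.
\end{align*}
Dually, since $z<y$, the element $z$ belongs to the indexing set $\set{x'\in\I\mid x'<y}$ of the join defining $\joinof{f}(y)$, so
\begin{align*}
  f(z) \;\leq\; \bigvee_{x'<y} f(x') \;=\; \joinof{f}(y)\,.
\end{align*}
Chaining the two inequalities yields $\meetof{f}(x)\leq \joinof{f}(y)$.

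There is no real obstacle here: the whole argument rests on the single fact that $\I$ is densely ordered, which guarantees the existence of the interpolating $z$. Note that monotonicity of $f$ is not even needed in the argument above, although in this section $f$ is tacitly assumed to be monotone. The proof would need adjustment only if $x$ or $y$ were boundary points behaving exceptionally, but since $0<1$ and $x<y$ sits strictly inside the order, density applies uniformly.
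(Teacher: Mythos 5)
Your proof is correct and is essentially the paper's own argument: pick an interpolating $z$ with $x<z<y$ and chain $\meetof{f}(x)\leq f(z)\leq \joinof{f}(y)$. Your observation that monotonicity of $f$ is not actually needed here is also accurate, since both inequalities are just instances of a meet lying below, and a join lying above, any member of the indexing family.
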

\begin{proof}
  Pick $z \in \I$ such that $x < z < y$ and observe then that
  $\meetof{f}(x) \leq f(z) \leq \joinof{f}(y)$.
  \qed
\end{proof}
\begin{proposition}
  \label{prop:meet-cont-closure}
  $\meetof{f}$ is the least \mc function above $f$ and $\joinof{f}$ is
  the greatest \jcont function below $f$.  The relations
  $\meetofjoinof{f} = \meetof{f}$ and $\joinofmeetof{f} = \joinof{f}$
  hold and, consequently, the operations
  $\joinof{(\,\cdot\,)} : \LmI \rto \LjI$ and
  $\meetof{(\,\cdot\,)} : \LjI \rto \LmI$ are inverse order preserving
  bijections.
\end{proposition}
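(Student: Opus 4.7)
The plan is to decompose the statement into four steps: (i) $\meetof{f}$ is \mcont and lies above $f$; (ii) any \mcont function $g \geq f$ dominates $\meetof{f}$; (iii) the identities $\meetofjoinof{f} = \meetof{f}$ and $\joinofmeetof{f} = \joinof{f}$ hold; (iv) the two operations are mutually inverse between $\LjI$ and $\LmI$. The dual assertions for $\joinof{f}$ follow by symmetric arguments throughout.

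For (i), the inequality $f \leq \meetof{f}$ is immediate from monotonicity of $f$. To verify that $\meetof{f}$ is \mcont, it suffices (by the meet-continuous analogue of \eqref{eq:joincontinuous}) to show $\meetof{f}(x) = \bigwedge_{x < x'} \meetof{f}(x')$. Expanding, the right-hand side becomes the iterated meet $\bigwedge_{x < x'} \bigwedge_{x' < x''} f(x'')$, which collapses to $\bigwedge_{x < x''} f(x'') = \meetof{f}(x)$ because, by density of the order on $\I$, every $x'' > x$ admits some $x'$ with $x < x' < x''$. Step (ii) is then immediate: for \mcont $g \geq f$, meet-continuity of $g$ gives $g(x) = \bigwedge_{x < x'} g(x') \geq \bigwedge_{x < x'} f(x') = \meetof{f}(x)$.

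For (iii), note that both $\meetof{(\,\cdot\,)}$ and $\joinof{(\,\cdot\,)}$ are monotone in the pointwise order, which is transparent from their defining formulas. Since $\joinof{f} \leq f$, monotonicity yields $\meetofjoinof{f} \leq \meetof{f}$. For the reverse inequality, the preceding lemma gives $\meetof{f}(x) \leq \joinof{f}(y)$ for every $y > x$, so $\meetof{f}(x) \leq \bigwedge_{y > x} \joinof{f}(y) = \meetofjoinof{f}(x)$. The identity $\joinofmeetof{f} = \joinof{f}$ is proved dually.

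For (iv), I would observe that any $f \in \LjI$ satisfies $\joinof{f} = f$: for $x > 0$, write $x = \bigvee_{y < x} y$, so join-continuity of $f$ delivers $f(x) = \bigvee_{y < x} f(y) = \joinof{f}(x)$, while $\joinof{f}(0) = \bigvee \emptyset = \bot = f(0)$. The dual identity $\meetof{g} = g$ holds on $\LmI$. Combining with (iii), for $f \in \LjI$ one has $\joinofmeetof{f} = \joinof{f} = f$, and dually $\meetofjoinof{g} = g$ for $g \in \LmI$; together with the monotonicity observed above, this establishes the mutual inversion. The only place that calls for any care is the cofinality step in (i) and the boundary behavior at $0$ and $1$, both of which reduce to the empty-join/meet conventions and pose no real obstacle.
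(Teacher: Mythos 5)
Your proof is correct and takes essentially the same route as the paper's: both establish the least/greatest extremal property directly from the defining formulas, derive $\meetofjoinof{f}=\meetof{f}$ from monotonicity of the operators together with the preceding lemma, and conclude the inverse bijection from the fixed-point identities on $\LjI$ and $\LmI$. Your treatment is in fact slightly more explicit than the paper's about the density/cofinality step and the empty-meet boundary cases, but this is a matter of presentation, not of substance.
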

\begin{proof}
  We prove only one statement. Let us show that $\meetof{f}$ is
  \mc; to this goal, we use
  equation~\eqref{eq:joincontinuous}:
  \begin{align*}
    \bigwedge_{x < t} \meetof{f}(t)
    & = \bigwedge_{x < t} \bigwedge_{t < t'} f(t')
    = \bigwedge_{x < t} f(t') 
    = \meetof{f}(x)\,.
  \end{align*}
  We observe next that $f \leq \meetof{f}$, as if $x < t$, then
  $f(x) \leq f(t)$.  This implies that if $g \in \LmI$ and
  $\meetof{f} \leq g$, then $f \leq \meetof{f} \leq g$.
  Conversely, if $g \in \LmI$ and $f \leq g$, then
  \begin{align*}
    \meetof{f}(x)  & = \bigwedge_{x < t} f(t)
    \leq  \bigwedge_{x < t} g(t) = g(x)\,.
  \end{align*}
  Let us prove the last sentence.  Clearly, both maps are order
  preserving. Let us show that $\meetofjoinof{f} = \meetof{f}$
  whenever $f$ is order preserving.
  We have $\meetofjoinof{f} \leq \meetof{f}$, since
  $\joinof{f} \leq f$ and $\meetof{(-)}$ is order preserves the
  pointwise ordering. For the converse inclusion, recall from the
  previous lemma that if $x < y$, then
  $\meetof{f}(x) \leq \joinof{f}(y)$, so
  \begin{align*}
    \meetof{f}(x) & \leq \bigwedge_{x < y} \joinof{f}(y) =  \meetofjoinof{f}(x)\,,
  \end{align*}
  for each $x \in \I$. Finally, to see that $\meetof{(-)}$ and
  $\joinof{(-)}$ are inverse to each other, observe that of
  $f \in \LmI$, then $\meetofjoinof{f} = \meetof{f} =f$.
  The equality $\joinofmeetof{f} =f$ for $f \in \LjI$ is derived similarly.

  \qed
\end{proof}

Recall that if $f \in \LjI$ (resp., $g \in \LmI$), then 
$\radj{f} \in \LmI$ (resp., $\ladj{f} \in \LjI$) denotes the right
adjoint of $f$ (resp., left adjoint of $g$). The following relation is
the key observation to uncover the \saq structure on $\LjI$.
\begin{lemma}
  \label{lemma:meetofjoinof}
  For each $f \in \LjI$, the
  relation $\joinof{(\radj{f})} = \ladj{(\meetof{f})}$ holds. 
\end{lemma}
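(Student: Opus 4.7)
The plan is to prove the equality pointwise, by computing each side of $\joinof{(\radj{f})}(x) = \ladj{(\meetof{f})}(x)$ explicitly and showing that both reduce to the single formula
\[
x \;\longmapsto\; \bigvee\{\,p \in \I \mid f(p) < x\,\},
\]
with the convention that the join over an empty set is $0$.

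For $\joinof{(\radj{f})}(x)$, I would start from the definition $\bigvee_{x' < x} \radj{f}(x')$, substitute the formula $\radj{f}(x') = \bigvee\{p \mid f(p) \leq x'\}$ recalled earlier in the paper, and interchange the two joins. The resulting indexing set $\{p \mid \exists x' < x,\ f(p) \leq x'\}$ coincides with $\{p \mid f(p) < x\}$: if $f(p) < x$ then $x' := f(p)$ witnesses the existential; conversely any witness forces $f(p) \leq x' < x$. At the boundary $x = 0$ both sets are empty and the value is $0$.

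For $\ladj{(\meetof{f})}(x)$, I would unfold the formula for the left adjoint together with the definition of $\meetof{f}$ to obtain
\[
\ladj{(\meetof{f})}(x) \;=\; \bigwedge\{\,p \mid \forall p' > p,\ x \leq f(p')\,\}.
\]
The crux is to recognise that the condition ``$\forall p' > p,\ f(p') \geq x$'' is equivalent to ``$p$ is an upper bound of $S_{x} := \{q \in \I \mid f(q) < x\}$''. Indeed, if $p < \sup S_{x}$ then some $q \in S_{x}$ satisfies $p < q$ and furnishes a $p' := q$ with $f(p') < x$; conversely, if $p \geq \sup S_{x}$, then any $p' > p$ strictly exceeds every element of $S_{x}$ and hence lies outside $S_{x}$, forcing $f(p') \geq x$. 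The meet over all such $p$ is therefore $\sup S_{x} = \bigvee\{q \mid f(q) < x\}$, matching the first computation.

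The main source of friction I anticipate is pedantic: keeping strict versus non-strict inequalities straight during the transition from ``$\exists x' < x$'' to ``$< x$'', and correctly handling the degenerate cases $x = 0$ and $p = 1$, where the indexing sets defining $\joinof{(-)}(0)$ or $\meetof{f}(1)$ become empty. Once the empty-join/empty-meet conventions are pinned down, the two explicit formulas match term by term and the lemma follows.
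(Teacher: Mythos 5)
Your proof is correct, but it takes a genuinely different route from the paper. You compute both sides pointwise and show that each reduces to the explicit formula $x \mapsto \bigvee\set{p \in \I \mid f(p) < x}$: the left-hand side by interchanging the two suprema in $\bigvee_{x'<x}\bigvee\set{p \mid f(p)\leq x'}$, the right-hand side by identifying $\set{p \mid x \leq \meetof{f}(p)}$ with the set of upper bounds of $\set{q \mid f(q) < x}$, whose infimum is its supremum. Both identifications are sound (the passage from ``$\exists x'<x,\ f(p)\leq x'$'' to ``$f(p)<x$'' and the upper-bound characterisation each use only the totality and completeness of $\I$, which are available), and your handling of the empty-join and empty-meet conventions at $x=0$ and $p=1$ is what makes the boundary cases go through. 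The paper instead never computes either side: it verifies the adjunction property $x \leq \meetof{f}(y) \iff \joinof{(\radj{f})}(x) \leq y$ (one implication directly by contradiction, the other by order duality) and concludes by uniqueness of left adjoints. The paper's argument is closer in spirit to how the lemma is subsequently used --- it directly exhibits $\joinof{(\radj{f})}$ as the left adjoint of $\meetof{f}$, which is the structural fact feeding into the $\star$-autonomous structure --- while yours has the bonus of producing a closed-form expression for $\opp{f}$ that could be reused elsewhere. Either argument is acceptable.
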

\begin{proof}
  Let $f \in \LjI$; we shall argue that $x \leq \meetof{f}(y)$ if and
  only if $\joinof{(\radj{f})}(x) \leq y$, for each $x,y \in \I$.

  We begin by proving that $x \leq \meetof{f}(y)$ implies that
  $\joinof{(\radj{f})}(x) \leq y$.  Suppose $x \leq \meetof{f}(y)$ so,
  for each $z$ with $y < z$, we have $x \leq f(z)$. Suppose that
  $\joinof{(\radj{f})}(x) \not\leq y$, thus there exists $w < x$ such
  that $\radj{f}(w) \not\leq y$. Then $y < \radj{f}(w)$, so
  $x \leq f(\radj{f}(w)) \leq w$, contradicting $w < x$. Therefore,
  $\joinof{(\radj{f})}(x) \not\leq y$.

  Dually, we can argue that if $g \in \LmI$, then
  $\joinof{g}(x) \leq y$ implies $x \leq \meetof{(\ladj{g})}(y)$.
  Letting $g := \radj{f}$ in this statement we obtain the converse
  implication: $\joinof{(\radj{f})}(x) \leq y$ implies
  $x \leq \meetof{(\ladj{(\radj{f})})}(y) = \meetof{f}(y)$.  \qed
\end{proof}

For $f,g \in \LjI$, let us define
\begin{align*}
  f \otimes g & := g \circ f\,, & f \oplus g & := \joinof{(\meetof{g}
    \circ \meetof{f})}\,, & \opp{f} & = \joinof{(\radj{f})} =
  \ladj{(\meetof{f})} \,.
\end{align*}
\begin{proposition}
  The tuple $\langle \LjI,id,\otimes,id,\oplus,\oppfun{}\rangle$
  is a \msaq.
\end{proposition}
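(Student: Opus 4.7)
The plan is to verify the defining conditions of a MIX \staraut quantale for the tuple $\langle \LjI, id, \otimes, id, \oplus, \oppfun\rangle$ in order. The monoid structure of $(\LjI, id, \otimes)$ and the distributivity of $\otimes$ over arbitrary joins is precisely the quantale structure already established in the preceding lemma, so what remains to check is: (i) $\oppfun$ is an order-reversing involution with $\opp{id} = id$; (ii) the duality identity $f \oplus g = \opp{(\opp{g} \otimes \opp{f})}$; (iii) the two residuation equivalences; and (iv) the MIX rule.

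For (i), start from the two descriptions $\opp{f} = \joinof{(\radj{f})} = \ladj{(\meetof{f})}$ supplied by Lemma~\ref{lemma:meetofjoinof}. Using Proposition~\ref{prop:meet-cont-closure}, which identifies $\meetof{(-)} : \LjI \rto \LmI$ and $\joinof{(-)} : \LmI \rto \LjI$ as mutually inverse order-preserving bijections, one computes $\meetof{(\opp{f})} = \meetof{(\joinof{(\radj{f})})} = \radj{f}$, whence $\oppopp{f} = \ladj{(\meetof{(\opp{f})})} = \ladj{(\radj{f})} = f$. Order-reversal of $\oppfun$ follows by combining the contravariance $f \leq g \Rightarrow \radj{g} \leq \radj{f}$ recalled in \secRef{notation} with monotonicity of $\joinof{(-)}$, while $\opp{id} = \joinof{(\radj{id})} = \joinof{id} = id$ since $id$ is simultaneously \jcont and \mcont. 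For (ii) the computation
\begin{align*}
  \opp{(\opp{g}\otimes\opp{f})}
  & = \joinof{(\radj{(\opp{f}\circ\opp{g})})}
    = \joinof{(\radj{(\opp{g})}\circ\radj{(\opp{f})})}
    = \joinof{(\meetof{g}\circ\meetof{f})}
    = f\oplus g
\end{align*}
uses that right adjoints reverse composition, together with the identity $\radj{(\opp{f})} = \radj{(\ladj{(\meetof{f})})} = \meetof{f}$.

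For (iii), I would unfold $f \otimes g \leq h$ to $g \circ f \leq h$ and rewrite $h \oplus \opp{g} = \joinof{(\radj{g} \circ \meetof{h})}$ after observing that $\meetof{(\opp{g})} = \meetof{(\joinof{(\radj{g})})} = \radj{g}$. The forward implication is easy: $g \circ f \leq h$ gives $f \leq \radj{g} \circ h \leq \radj{g} \circ \meetof{h}$ pointwise, and since $f \in \LjI$ and $\joinof{(-)}$ delivers the greatest \jcont function below its argument, one concludes $f \leq h \oplus \opp{g}$. The reverse direction is more delicate: assuming $f \leq \joinof{(\radj{g} \circ \meetof{h})}$, one estimates for each $x$ that $g(f(x)) \leq g(\bigvee_{y<x}\radj{g}(\meetof{h}(y))) = \bigvee_{y<x} g(\radj{g}(\meetof{h}(y))) \leq \bigvee_{y<x}\meetof{h}(y) \leq h(x)$, where the last inequality is the lemma preceding Proposition~\ref{prop:meet-cont-closure} (which gives $\meetof{h}(y) \leq \joinof{h}(x)$ for $y < x$) together with $\joinof{h} = h$. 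The second residuation, the equivalence of $f \otimes g \leq h$ with $g \leq \opp{f} \oplus h$, then follows by cyclic duality: applying $\oppfun$ and the de Morgan identity (ii) converts it into an instance of the one already proved.

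Finally (iv) is free: by Lemma~\ref{lemma:Mixequivalent}, MIX reduces to $0 \leq 1$, which under the present identifications is $id \leq id$. The main technical obstacle lies in the reverse direction of the residuation step, which forces one to shuttle between the \jcont and \mcont representatives of the same datum via $\meetof{(-)}$ and $\joinof{(-)}$; the essential estimate $\bigvee_{y<x}\meetof{h}(y) \leq h(x)$ is what makes the two possible presentations of the residual agree.
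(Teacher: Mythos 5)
Your proof is correct and follows essentially the same route as the paper's: it establishes the involution and the de Morgan identity via Lemma~\ref{lemma:meetofjoinof} and the inverse bijections $\meetof{(-)}$, $\joinof{(-)}$, reduces the residuation law to $h \oplus \opp{g} = \joinof{(\radj{g}\circ\meetof{h})}$, derives the second residuation algebraically from the first, and obtains MIX from $0 = 1 = id$ via Lemma~\ref{lemma:Mixequivalent}. The only (harmless) divergence is that for the reverse residuation implication you make a direct pointwise estimate using the counit and the inequality $\meetof{h}(y) \leq \joinof{h}(x)$ for $y<x$, where the paper instead invokes the universal property of $\joinof{(-)}$ from Proposition~\ref{prop:meet-cont-closure} a second time.
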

\begin{proof}
  The correspondence $\opp{(\cdot)}$ is order reversing as it is the
  composition of an order reversing function with a monotone one; by
  Lemma~\ref{lemma:meetofjoinof}, it is an involution:
  \begin{align*}
    \oppopp{f} & = \ladj{(\joinof{(\meetof{(\radj{f})})})}
   =\ladj{(\radj{f})} = f\,.
  \end{align*}
  To verify that
  \begin{align}
    \label{eq:duality}
    \opp{(f \otimes g)} & = \opp{g} \oplus \opp{f} 
  \end{align}
  holds, for any $f,g \in \LjI$, we compute as follows:
  \begin{align*}
    \opp{g} \oplus \opp{f} & = \joinof{(\meetof{(\opp{f})} \circ
      \meetof{(\opp{g})})} \\
    &= \joinof{(\meetof{\opprj{f}} \circ
      \meetof{\opprj{g}})}
    =  \joinof{(\radj{f} \circ \radj{g})}
    =  \opprj{(g \circ f)}
    = \opp{(g \circ f)} = \opp{(f \otimes g)}\,.
  \end{align*}
  We verify next that, for any $f,g,h \in \LjI$,
  \begin{align}
    \label{eq:residfirst}
    f \otimes g & \leq h 
    \quad\tiff\quad f \leq h \oplus \opp{g} \,.
  \end{align}
  Notice that
  $ h \oplus \opp{g} = \joinof{(\meetof{(\opp{g})} \circ \meetof{h})}
  = \joinof{(\meetof{\opprj{g}} \circ \meetof{h})} = \joinof{(\radj{g}
    \circ \meetof{h})}$,
  so
  \begin{align*}
    f \leq h \oplus \opp{g} \quad \tiff \quad & f \leq
    \joinof{(\radj{g} \circ \meetof{h})}\,,
    \tag*{by the equality just established,}
    \\
    \tiff \quad & f \leq\radj{g} \circ \meetof{h}\,,
        \tag*{by Proposition~\ref{prop:meet-cont-closure},}
        \\
        \tiff \quad & g \circ f  \leq \meetof{h}\,,
        \tag*{since $g(x) \leq h$ iff $x \leq \radj{g}(y)$,}
        \\
    \tiff \quad & f \otimes g = g \circ f \leq \joinof{\meetof{h}} =
    h\,,
     \tag*{using again Proposition~\ref{prop:meet-cont-closure}.}
   \end{align*}
   It is an immediate algebraic consequence of \eqref{eq:duality} and
   \eqref{eq:residfirst} that $f \otimes g \leq h$ is equivalent to
   $g \leq \opp{f} \oplus h$,
   for any $f,g,h \in \LjI$.
   Namely, we have
   \begin{align*}
     f \otimes g \leq h \quad & \tiff \quad f \leq h \oplus \opp{g}  \\
     & \tiff \quad \opp{(h \oplus \opp{g})} \leq \opp{f} \\
     & \tiff \quad g \otimes \opp{h} =  \oppopp{g} \otimes \opp{h}  \leq \opp{f} \\
     & \tiff \quad g \leq \opp{f} \oplus \oppopp{h} = \opp{f} \oplus
     h\,. 
   \end{align*}
  Finally, recall that the identity $id$ is both \jcont and \mc and
  therefore $\meetof{id} = id$. Then it is easily seen that $id$ is
  both a unit for $\otimes$ and for $\oplus$. As seen in
  Lemma~\ref{lemma:Mixequivalent}, this implies that $\LjI$ satisfies
  the mix rule.  \qed
\end{proof}

\section{Paths}
\label{sec:paths}

Let in the following $d \geq 2$ be a fixed integer; we shall use
$\I^{d}$ to denote the $d$-fold product of $\I$ with itself. That is,
$\I^{d}$ is the usual geometric cube in dimension $d$. Let us recall
that $\I^{d}$, as a product of the poset $\I$, has itself the
structure of a poset (the order being coordinate-wise) which,
moreover, is complete.

\begin{definition}
  \label{def:path}
  A \emph{path in $\I^{d}$} is a chain $C \subseteq \I^{d}$ with the
  following properties:
  \begin{enumerate}
  \item if $X \subseteq C$, then $\bigwedge X \in C$ and
    $\bigvee X \in C$,
  \item $C$ is dense as an ordered set: if $x,y \in C$ and $x < y$,
    then $x < z < y$ for some $z \in C$.
  \end{enumerate}
\end{definition}
We have given a working definition of the notion of path in $\I^{d}$,
as a totally ordered dense sub-complete-lattice of $\I^{d}$. The next
theorem state the equivalence among several properties, each of which
could be taken as a definition of the notion of path.
\begin{theorem}
  Let $d \geq 2$ and let $C \subseteq \I^{d}$. The following
  conditions are then equivalent:
  \begin{enumerate}
  \item $C$ is a path as defined in Definition~\ref{def:path};
  \item $C$ is a maximal chain of the poset $\I^{d}$;
  \item There exists a monotone (increasing) \tcont map
    $\p : \I \rto \I^{d}$ such that $\p(0) = \vec{0}$,
    $\p(1) = \vec{1}$, whose image is $C$.
  \end{enumerate}
\end{theorem}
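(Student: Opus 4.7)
The plan is to prove $(1) \Leftrightarrow (2)$ by elementary order-theoretic arguments, then $(3) \Rightarrow (1)$ using topological continuity, and finally $(1) \Rightarrow (3)$ by explicitly reparameterizing $C$ over $\I$; this last step is the main obstacle.

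For $(2) \Rightarrow (1)$: if $X \subseteq C$, then $\bigvee X$ is comparable to each $c \in C$ (case analysis: either $c \ge x$ for all $x \in X$, whence $c \ge \bigvee X$, or $c \le x$ for some $x \in X$, whence $c \le \bigvee X$), so adjoining $\bigvee X$ to $C$ yields a chain and maximality forces $\bigvee X \in C$; the argument for $\bigwedge X$ is dual. For density, suppose $x < y$ in $C$ with no $C$-element strictly in between; then comparability of any $c \in C$ with $x$ and $y$ forces $c \le x$ or $c \ge y$, so the midpoint $(x+y)/2 \in \I^d$ is comparable with every element of $C$, and adjoining it contradicts maximality. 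Conversely, for $(1) \Rightarrow (2)$ let $z \in \I^d$ be comparable to every $c \in C$; splitting $C = A \cup B$ with $A = \{c \in C \mid c \le z\}$ and $B = \{c \in C \mid z \le c\}$, closure under meets and joins places $a := \bigvee A$ and $b := \bigwedge B$ in $C$, and $a \le z \le b$; the density condition forbids $a < b$, so $a = b = z \in C$.

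For $(3) \Rightarrow (1)$, the image $C = \p(\I)$ is a chain by monotonicity of $\p$. Since monotone convergence in $\I^d$ coincides with topological convergence, a monotone continuous map preserves arbitrary suprema and infima of chains; this gives closure of $C$ under $\bigvee$ and $\bigwedge$. For density, if $\p(s) < \p(t)$ some coordinate satisfies $\p_i(s) < \p_i(t)$, and the intermediate value theorem applied to $\p_i$ on $[s,t]$ produces $u \in (s,t)$ with $\p_i(s) < \p_i(u) < \p_i(t)$, hence $\p(s) < \p(u) < \p(t)$ in $C$.

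For $(1) \Rightarrow (3)$, the crux of the proof, define the linear map $\tau : \I^d \rto \I$ by $\tau(x) := \tfrac{1}{d}\sum_{i=1}^{d} x_i$. Its restriction to $C$ is strictly monotone, since $c \le c'$ and $c \ne c'$ in $\I^d$ forces some coordinate to increase strictly; hence $\tau|_C$ is injective. Linearity together with the chain property give $\tau(\bigvee X) = \bigvee \tau(X)$ and $\tau(\bigwedge X) = \bigwedge \tau(X)$ for every $X \subseteq C$, so $\tau(C)$ is a subchain of $\I$, containing $0$ and $1$, closed under arbitrary meets and joins, and dense. Any such subchain of $\I$ must equal $\I$: otherwise $a := \sup\{y \in \tau(C) \mid y \le x\}$ and $b := \inf\{y \in \tau(C) \mid y \ge x\}$ would lie in $\tau(C)$ with $a < b$ and no $\tau(C)$-element between, contradicting density. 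Therefore $\tau|_C : C \rto \I$ is a bijection; setting $\p := (\tau|_C)^{-1}$ yields a monotone surjection $\p : \I \rto \I^d$ with $\p(0) = \vec{0}$ and $\p(1) = \vec{1}$. Continuity follows by a compactness argument: for $s_n \to s$ in $\I$, any subsequential limit $x$ of $\p(s_n)$ in the compact space $\I^d$ satisfies $\tau(x) = s$ by continuity of $\tau$, hence $x = \p(s)$, which forces $\p(s_n) \to \p(s)$. The reparameterization step is the main obstacle, and it succeeds precisely because every strictly positive linear functional on $\I^d$ is strictly monotone along any chain.
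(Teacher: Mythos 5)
Your overall architecture is sound. Since the paper states this theorem without printing a proof, I can only judge the argument on its own terms: the equivalence $(1)\Leftrightarrow(2)$ is correct as written; the implication $(3)\Rightarrow(1)$ is correct (preservation of suprema and infima along chains by a monotone continuous map into $\I^{d}$ is the right tool, and the intermediate value theorem gives density); and reparameterizing $C$ by $\tau(x)=\tfrac1d\sum_i x_i$ is exactly the right idea for $(1)\Rightarrow(3)$ --- strict monotonicity of $\tau$ on comparable pairs, preservation of sups and infs of subsets of a chain (which does require the chain/directedness property, as you note), and the argument that a dense sub-complete-lattice of $\I$ containing $0$ and $1$ is all of $\I$ are all fine.

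The one genuine gap is the final continuity step. From $\tau(x)=s=\tau(\p(s))$ you conclude ``hence $x=\p(s)$'', but $\tau$ is far from injective on $\I^{d}$ (e.g.\ $\tau(1,0)=\tau(0,1)$ when $d=2$), so this inference needs the additional fact that the subsequential limit $x$ is \emph{comparable} with $\p(s)$; only then does strict monotonicity of $\tau$ on comparable pairs yield $x=\p(s)$. Equivalently, you would need to know that $C$ is topologically closed, which you never establish. The patch is short: each $\p(s_{n_k})$ lies in the chain $C$ and is therefore comparable with $\p(s)$; pass to a further subsequence along which the direction of that comparison is constant and use that the order relation $\leq$ of $\I^{d}$ is a closed subset of $\I^{d}\times\I^{d}$ to conclude that $x$ is comparable with $\p(s)$, whence $x=\p(s)$. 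Alternatively you can bypass compactness entirely: $\p=(\tau|_C)^{-1}$ is an order isomorphism between $\I$ and the sub-complete-lattice $C$, hence preserves arbitrary sups and infs; so each coordinate $\p_i:\I\rto\I$ is simultaneously join- and meet-continuous, therefore has no jump discontinuities and is continuous. With either repair the proof is complete.
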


\section{Paths in dimension 2}
\label{sec:pathsDimensionTwo}

We give next a further characterization of the notion of path, valid
in dimension $2$. The principal result of this Section,
Theorem~\ref{thm:pathsasjcont}, states that paths in dimension $2$ are
(up to isomorphism) just elements of the quantale $\LjI$.

\medskip

For a monotone function $f : \I \rto \I$ define
$C_{f} \subseteq \I^{2}$ by the formula
\begin{align}
  \label{def:CfDimTwo}
  C_{f} & := \bigcup_{x \in \I}\; \set{x}\times[\joinof{f}(x),\meetof{f}(x)]\,.
\end{align}
Notice that, by Proposition~\ref{prop:meet-cont-closure},
$C_{f} = C_{\joinof{f}} =C_{\meetof{f}}$.
\begin{proposition}
  \label{lemma:pathd2}
  $C_{f}$ is a path in $\I^{2}$.
\end{proposition}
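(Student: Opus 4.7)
The plan is to verify the three requirements of Definition~\ref{def:path} directly: that $C_{f}$ is totally ordered, closed under arbitrary meets and joins formed in $\I^{2}$, and dense in itself. The crucial technical input is the lemma stating $\meetof{f}(x) \leq \joinof{f}(y)$ whenever $x < y$, together with the pointwise inequality $\joinof{f} \leq f \leq \meetof{f}$.

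The chain property falls out immediately. For $(x_{1},y_{1}),(x_{2},y_{2}) \in C_{f}$ with $x_{1} \leq x_{2}$, if $x_{1} = x_{2}$ then both $y_{i}$ lie in the common interval $[\joinof{f}(x_{1}),\meetof{f}(x_{1})]$; if $x_{1} < x_{2}$, the lemma gives $y_{1} \leq \meetof{f}(x_{1}) \leq \joinof{f}(x_{2}) \leq y_{2}$, hence $(x_{1},y_{1}) \leq (x_{2},y_{2})$.

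For closure under joins (meets being dual), take $X \subseteq C_{f}$ and let $\bar x$, $\bar y$ be the coordinate-wise joins, so $\bigvee X = (\bar x,\bar y)$ in $\I^{2}$; I need $\bar y \in [\joinof{f}(\bar x),\meetof{f}(\bar x)]$. The upper bound holds because any $(x,y) \in X$ satisfies $y \leq \meetof{f}(\bar x)$, directly when $x = \bar x$, and through $y \leq \meetof{f}(x) \leq \joinof{f}(\bar x) \leq \meetof{f}(\bar x)$ when $x < \bar x$. For the lower bound I split cases. If some $(\bar x,y_{0}) \in X$, then $\bar y \geq y_{0} \geq \joinof{f}(\bar x)$. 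Otherwise every first coordinate in $X$ is strictly below $\bar x$, and for each such $x$ one picks $(x',y') \in X$ with $x < x'$, so that $\bar y \geq y' \geq \joinof{f}(x') \geq f(x)$; taking the supremum over all $x < \bar x$ and using the definition $\joinof{f}(\bar x) = \bigvee_{x < \bar x} f(x)$ yields $\bar y \geq \joinof{f}(\bar x)$.

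For density, suppose $(x_{1},y_{1}) < (x_{2},y_{2})$ in $C_{f}$. If $x_{1} < x_{2}$, density of $\I$ supplies $x_{3}$ strictly between and we set $y_{3} := \joinof{f}(x_{3})$; the chain argument applied twice places $(x_{3},y_{3}) \in C_{f}$ strictly between the two given points. If $x_{1} = x_{2}$ then $y_{1} < y_{2}$ both lie in $[\joinof{f}(x_{1}),\meetof{f}(x_{1})]$, and any $y_{3}$ strictly between gives $(x_{1},y_{3}) \in C_{f}$. The main obstacle is the lower-bound portion of the closure step: it requires the case split on whether the supremum $\bar x$ is attained by the projection of $X$ and, in the unattained case, a careful appeal to the fact that $\joinof{f}(\bar x)$ is defined as a supremum over points strictly below $\bar x$.
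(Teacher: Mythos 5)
Your proof is correct and follows essentially the same route as the paper: it verifies linearity, closure under arbitrary meets and joins, and density, with the lemma that $x<y$ implies $\meetof{f}(x)\leq\joinof{f}(y)$ (together with $\joinof{f}\leq f\leq\meetof{f}$) as the key input. The only divergence is in the lower bound of the join-closure step, where the paper simply invokes the join-continuity of $\joinof{f}$ to get $\joinof{f}(\bigvee x_i)=\bigvee\joinof{f}(x_i)\leq\bigvee y_i$, which makes your case split on whether $\bar x$ is attained unnecessary.
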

\begin{proof}  
  We prove first that $C_{f}$, with the product ordering induced from
  $\I^{2}$, is a linear order. To this goal, we shall argue that, for
  $(x,y),(x',y') \in C_{f}$, we have $(x,y) < (x',y')$ iff either $x <
  x'$ or $x = x'$ and $y < y'$. That is, $C_{f}$ is a lexicographic
  product of linear orders, whence a linear order.
  Let us suppose that one of these two conditions holds: a) $x < x'$,
  b) $x = x'$ and $y < y'$.  If a), then
  $\meetof{f}(x) \leq \joinof{f}(x')$.  Considering that
  $y \in [\joinof{f}(x),\meetof{f}(x)]$ and
  $y' \in [\joinof{f}(x'),\meetof{f}(x')]$ we deduce $y \leq y'$. This
  proves that $(x,y) < (x',y')$ in the product ordering.  If b) then
  we also have $(x,y) < (x',y')$ in the product ordering.
  The converse implication, $(x,y) < (x',y')$ implies $x < x'$ or $x =
  x'$ and $y < y'$, trivially holds.

  We argue next that $C_{f}$ is closed under joins from $\I^{2}$.  Let
  $(x_{i},y_{i})$ be a collection of elements in $C_{f}$, we aim to
  show that $(\bigvee x_{i}, \bigvee y_{i}) \in C_{f}$, i.e.
  $\bigvee y_{i} \in
  [\joinof{f}(\bigvee x_{i}), \meetof{f}(\bigvee x_{i})]$.
  Clearly, as $y_{i} \leq \meetof{f}(x_{i})$, then
  $\bigvee y_{i} \leq \bigvee \meetof{f}(x_{i}) \leq
  \meetof{f}(\bigvee x_{i})$. Next, $\joinof{f}(x_{i}) \leq y_{i}$,
  whence
  $\joinof{f}(\bigvee x_{i}) = \bigvee \joinof{f}(x_{i}) \leq \bigvee
  y_{i}$.
  By a dual argument, we have that $(\bigwedge x_{i}, \bigwedge y_{i})
  \in C_{f}$.
  
  Finally, we show that $C_{f}$ is dense; to this goal let
  $(x,y),(x',y') \in C_{f}$ be such that $(x,y) < (x',y')$. If
  $x < x'$ then we can find a $z$ with $x < z < x'$; of course,
  $(z,f(z)) \in C_{f}$ and, but the previous characterisation of the
  order, $(x,y) < (z,f(z)) < (x',y')$ holds.  If $x = x'$ then
  $y < y'$ and we can find a $w$ with $y < w < y'$; as
  $ w \in [y,y'] \subseteq [\joinof{f}(x),\meetof{f}(x)]$, then
  $(x,w) \in C_{f}$; clearly, we have then
  $(x,y) < (x,w) < (x,y') = (x',y')$.  \qed
\end{proof}
For $C$ a path in $\I^{2}$, define
\begin{align}
  \label{eq:defvC}
  f_{C}^{-}(x) & := \bigwedge \set{y \mid (x,y) \in C}\,,
  &
  f_{C}^{+}(x) & := \bigvee \set{y \mid (x,y) \in C}\,.
\end{align}
Recall that a path $C \subseteq \I^{2}$ comes with \bcont surjective
projections $\pi_{1},\pi_{2} : C\rto \I$. Observe that
the following relations hold:
\begin{align}
  \label{eq:exprviaadjoints}
  f^{-}_{C} & = \pi_{2} \circ \ladj{(\pi_{1})}\,, & f^{+}_{C} & =
  \pi_{2} \circ \radj{(\pi_{1})}\,.
\end{align}
Indeed, we have
\begin{align*}
  \pi_{2}(\ladj{(\pi_{1})}(x))
  & = \pi_{2}(\bigwedge \set{(x',y) \in C \mid x = x' })
   \tag*{using  equation \eqref{eq:adjsurjective}}\,,
  \\
  & = \bigwedge \pi_{2}(\set{(x',y) \in C \mid x = x' }) = \bigwedge
  \set{ y \mid (x,y) \in C }\,.
\end{align*}
The other expression for $f^{+}$ is derived similarly. In particular,
the expressions in \eqref{eq:exprviaadjoints} show that
$f^{-} \in \LjI$ and $f^{+} \in \LmI$.

\begin{lemma}
  \label{lemma:firstInverse}
  We have
  \begin{align*}
    f_{C}^{-} & = \joinof{(f_{C}^{+})}\,, \quad f_{C}^{+}  =
    \meetof{(f_{C}^{-})}\,,
    \quad \text{and} \quad C = C_{f_{C}^{+}} = C_{f_{C}^{-}}\,.
  \end{align*}
\end{lemma}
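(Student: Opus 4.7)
The plan is to establish the two functional identities
$f_{C}^{-} = \joinof{(f_{C}^{+})}$ and $f_{C}^{+} = \meetof{(f_{C}^{-})}$
first by pointwise comparison, and then to derive the set-theoretic
equality $C = C_{f_{C}^{+}} = C_{f_{C}^{-}}$ from them by combining with
Proposition~\ref{prop:meet-cont-closure}. The pivotal observation for
the two identities is that $x' < x$ forces $f_{C}^{+}(x') \leq f_{C}^{-}(x)$:
any $(x',y),(x,z) \in C$ are comparable in the product order (since $C$
is a chain), and the option $(x,z) \leq (x',y)$ is excluded by $x' < x$,
so $y \leq z$; taking sup over $y$ and inf over $z$ yields the claim.
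This at once gives $\joinof{(f_{C}^{+})}(x) \leq f_{C}^{-}(x)$, while the
reverse inequality follows from the pointwise inequality
$f_{C}^{-} \leq f_{C}^{+}$ combined with $f_{C}^{-} \in \LjI$, since then
$f_{C}^{-}(x) = \bigvee_{x'<x} f_{C}^{-}(x') \leq \bigvee_{x'<x} f_{C}^{+}(x')$.
The pointwise inequality $f_{C}^{-} \leq f_{C}^{+}$ in turn amounts to
the fibres $S_{y} := \set{w \mid (y,w) \in C}$ being nonempty for every
$y \in \I$, i.e.\ to surjectivity of $\pi_{1} : C \rto \I$, which is a
short consequence of density together with closure of $C$ under
arbitrary joins and meets. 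The dual identity
$f_{C}^{+} = \meetof{(f_{C}^{-})}$ is obtained symmetrically.

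With both identities in hand, Proposition~\ref{prop:meet-cont-closure}
(specifically, $\joinof{g} = g$ for $g \in \LjI$ and $\meetof{g} = g$
for $g \in \LmI$) collapses both $C_{f_{C}^{+}}$ and $C_{f_{C}^{-}}$ to
the single set $\bigcup_{x \in \I} \set{x} \times [f_{C}^{-}(x), f_{C}^{+}(x)]$,
so they coincide. The inclusion $C \subseteq C_{f_{C}^{+}}$ is then
immediate from the very definitions of $f_{C}^{\pm}$.

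The main obstacle is the reverse inclusion $C_{f_{C}^{+}} \subseteq C$,
which reduces to showing that each fibre $S_{x}$ is the whole closed
interval $[f_{C}^{-}(x), f_{C}^{+}(x)]$. Given $y$ in this interval, I
would set $\alpha := \bigvee(S_{x} \cap [0,y])$ and
$\beta := \bigwedge(S_{x} \cap [y,1])$; both sets are nonempty (the
endpoints $f_{C}^{\pm}(x)$ belong to $S_{x}$ by closure of $C$ under
joins and meets), both $\alpha$ and $\beta$ lie in $S_{x}$ by the same
closure, and $\alpha \leq y \leq \beta$. If $\alpha < \beta$, density
applied to $(x,\alpha) < (x,\beta)$ in $C$ produces a strictly
intermediate $(x',y') \in C$; the induced order on $C$ is lexicographic
(for $(x_{1},y_{1}),(x_{2},y_{2}) \in C$ with $x_{1} < x_{2}$ the product
order alone forces $y_{1} \leq y_{2}$), so this intermediate point must
satisfy $x' = x$ and $\alpha < y' < \beta$. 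Whichever side of $y$ the
value $y'$ falls on then contradicts either the maximality of $\alpha$
or the minimality of $\beta$. Hence $\alpha = \beta$, which forces
$y = \alpha \in S_{x}$ and finishes the argument.
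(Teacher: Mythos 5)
Your proof is correct, and for the first two identities it follows essentially the paper's route: the pivotal inequality $x' < x \Rightarrow f_{C}^{+}(x') \leq f_{C}^{-}(x)$, extracted from comparability in the chain $C$, is exactly the paper's key step (the paper packages it as ``$f_{C}^{+}$ is the least \mc function above $f_{C}^{-}$'' rather than computing $\joinof{(f_{C}^{+})}$ pointwise, but this is a cosmetic difference). Where you genuinely diverge is in the final equality $C = C_{f_{C}^{+}} = C_{f_{C}^{-}}$. The paper proves the reverse inclusion by observing that any $(x,y)$ with $f_{C}^{-}(x) \leq y \leq f_{C}^{+}(x)$ is comparable with every element of $C$ and then invoking the fact that a path is a \emph{maximal} chain --- a property that comes from the (unproved) equivalence theorem of Section~5, so the paper's argument is shorter but leans on an external result. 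You instead work directly from the two defining clauses of a path: you show each fibre $S_{x}$ is order-convex and an interval by taking $\alpha = \bigvee(S_{x}\cap[0,y])$ and $\beta = \bigwedge(S_{x}\cap[y,1])$, using closure under joins and meets to keep $\alpha,\beta$ in $S_{x}$ and density to rule out $\alpha < \beta$. This is more elementary and self-contained (it also forces you to justify surjectivity of $\pi_{1}$, which the paper simply recalls as a known property of paths); the price is a somewhat longer argument. Both are valid; yours has the advantage of not depending on the path-equals-maximal-chain characterization.
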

\begin{proof}
  Let us firstly argue that $(x,y) \in C$ if and only if
  $f_{C}^{-}(x) \leq y \leq f_{C}^{+}(y)$. The direction from left to
  right is obvious. Conversely, it is easily verified that if
  $f_{C}^{-}(x) \leq y \leq f_{C}^{+}(y)$, then the pair $(x,y)$ is
  comparable with all the elements of $C$; then, since $C$ is a
  maximal chain, necessarily $(x,y) \in C$.

  Therefore, let us argue that $f_{C}^{+} = \meetof{(f_{C}^{-})}$; we
  do this by showing that $f_{C}^{+}$ is the least \mc function above
  $f_{C}^{-}$.  We have $f_{C}^{-}(x) \leq f_{C}^{+}(x)$ for each
  $x \in \I$ since the fiber sets
  $\pi_{1}^{-1}(x) = \set{(x',y) \in C \mid x' = x}$ are non empty.
  Suppose now that $f_{C}^{-} \leq g \in \LmI$. In order to prove that
  $f_{C}^{+} \leq g$ it will be enough to prove that
  $f_{C}^{+}(x) \leq g(x')$ whenever $x < x'$.  Observe that if
  $x < x'$ then $f_{C}^{+}(x) \leq f_{C}^{-}(x')$: this is because if
  $(x,y),(x',y') \in C$, then $x < x'$ and $C$ a chain imply
  $y \leq y'$. We deduce therefore $f_{C}^{+}(x) \leq f_{C}^{-}(x')
  \leq g(x')$.
  The relation $f_{C}^{-} = \joinof{(f_{C}^{+})}$ is proved similarly.
    \qed
\end{proof}

\begin{lemma}
  \label{lemma:secondInverse}
  Let $f : \I \rto \I$ be monotone and consider the path $C_{f}$.
  Then $\joinof{f} = f_{C_{f}}^{-}$ and $\meetof{f} = f_{C_{f}}^{+}$.
\end{lemma}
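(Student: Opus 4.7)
The plan is to prove the statement by directly unfolding the definitions: the key observation is that, in the definition of $C_{f}$, the fiber above a point $x \in \I$ is precisely the closed interval $[\joinof{f}(x),\meetof{f}(x)]$, so the infimum and supremum of this fiber compute $\joinof{f}(x)$ and $\meetof{f}(x)$ respectively.

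First, I would spell out from \eqref{def:CfDimTwo} that
\begin{align*}
  \set{ y \in \I \mid (x,y) \in C_{f}} & = [\joinof{f}(x),\meetof{f}(x)]\,,
\end{align*}
noting that the right-hand side is a non-empty interval. The non-emptiness, namely $\joinof{f}(x) \leq \meetof{f}(x)$, follows immediately from the monotonicity of $f$: for any $x' < x < x''$ one has $f(x') \leq f(x'')$, whence $\bigvee_{x' < x} f(x') \leq \bigwedge_{x < x''} f(x'')$. The extreme cases $x = 0$ and $x = 1$ are handled by the conventions $\bigvee\emptyset = 0$ and $\bigwedge\emptyset = 1$.

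Given this, plugging into the defining formulas \eqref{eq:defvC} of $f_{C}^{-}$ and $f_{C}^{+}$ with $C := C_{f}$ yields
\begin{align*}
  f_{C_{f}}^{-}(x) & = \bigwedge [\joinof{f}(x),\meetof{f}(x)] = \joinof{f}(x)\,,
  &
  f_{C_{f}}^{+}(x) & = \bigvee [\joinof{f}(x),\meetof{f}(x)] = \meetof{f}(x)\,,
\end{align*}
which is the desired identity.

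There is no real obstacle here: the statement is essentially a matter of reading off the definitions, with the only minor point being the verification that the fiber interval is non-empty, so that its infimum and supremum are the expected endpoints. The substance of the lemma is really carried by Proposition~\ref{prop:meet-cont-closure} and Lemma~\ref{lemma:firstInverse}, which together ensure that the assignments $f \mapsto C_{f}$ and $C \mapsto f_{C}^{\pm}$ are inverse to each other once one restricts to the \jcont (or \mcont) representative of an equivalence class of monotone functions.
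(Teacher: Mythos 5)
Your proof is correct, but it takes a genuinely different route from the paper's. You compute directly: the fiber of $C_{f}$ over $x$ is, by the very definition \eqref{def:CfDimTwo}, the non-empty closed interval $[\joinof{f}(x),\meetof{f}(x)]$ (non-empty since monotonicity gives $\joinof{f} \leq f \leq \meetof{f}$ pointwise), and then the meet and join in \eqref{eq:defvC} are just the endpoints of that interval. The paper instead works through adjunctions: it verifies that $\langle id, \joinof{f}\rangle$ is \LADJ{} to the projection $\pi_{1} : C_{f} \rto \I$, and then invokes the previously derived identity $f^{-}_{C_{f}} = \pi_{2}\circ\ladj{(\pi_{1})}$ from \eqref{eq:exprviaadjoints}. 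Your argument is shorter and more elementary, needing only the observation that a non-empty closed interval contains its infimum and supremum; the paper's argument buys an explicit description of $\ladj{(\pi_{1})}$ that foreshadows and parallels the higher-dimensional Lemma~\ref{lemma:Crightadj}, where the fibers are no longer intervals and the adjoint viewpoint becomes essential. Your closing remark attributing the ``substance'' to Proposition~\ref{prop:meet-cont-closure} and Lemma~\ref{lemma:firstInverse} is a fair comment on the surrounding development but is not needed for, and plays no role in, the proof of this particular lemma.
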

\begin{proof}
  For a monotone $f : \I \rto \I$ define $f' : \I \rto C_{f}$ by
  $f' := \langle id_{\I},f\rangle$, so $f = \pi_{2} \circ f'$.  Recall
  that $f^{-}_{C_{f}} = \pi_{2} \circ \ladj{(\pi_{1})}$. Therefore, in
  order to prove the relation
  $\joinof{f} = f^{-}_{C_{f}} = \pi_{2} \circ \ladj{(\pi_{1})}$ it
  shall be enough to prove that $\langle id, \joinof{f}\rangle$ is
  left adjoint to the first projection (that is, we prove that
  $\langle id, \joinof{f}\rangle = \ladj{(\pi_{1})}$, from which it
  follows that
  $\joinof{f} = \pi_{1}\circ \langle id, \joinof{f}\rangle = \pi_{2}
  \circ \ladj{(\pi_{1})}$).  This amounts to verify that, for
  $x \in \I$ and $(x',y) \in C_{f}$ we have $x \leq \pi_{1}(x',y)$ if
  and only if $(x,\joinof{f}(x)) \leq (x',y)$. To achieve this goal,
  the only non trivial observation is that if $x \leq x'$, then
  $\joinof{f}(x) \leq \joinof{f}(x') \leq y$.
  The relation $\meetof{f} =
  \pi_{2} \circ \radj{(\pi_{1})}$ is proved similarly.
  \qed
\end{proof}

\begin{theorem}
  \label{thm:pathsasjcont}
  There is a bijective correspondence between the following data:
  \begin{enumerate}
  \item paths in $\I^{2}$,
  \item \jc functions in $\LjI$,
  \item \mc functions in $\LmI$.
  \end{enumerate}
\end{theorem}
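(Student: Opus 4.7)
The plan is to assemble the bijection from pieces already in place: Proposition~\ref{prop:meet-cont-closure} gives the correspondence between (2) and (3) via the mutually inverse operations $\joinof{(-)}$ and $\meetof{(-)}$, so the real work is establishing the bijection between (1) and (2).

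First I would define the two candidate maps. From paths to $\LjI$, send $C$ to $f_C^-$; from $\LjI$ to paths, send $f$ to $C_f$. The second map is well-defined by Proposition~\ref{lemma:pathd2} (it is a path for \emph{any} monotone $f$, in particular for $f \in \LjI$). For the first map I need to observe that $f_C^-$ is join-continuous, and this is exactly the content of the identity $f_C^- = \pi_2 \circ \ladj{(\pi_1)}$ recorded in~\eqref{eq:exprviaadjoints}: it exhibits $f_C^-$ as a composite of left adjoints, hence as an element of $\LjI$.

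Next I would check the two round trips. Starting from $f \in \LjI$, Lemma~\ref{lemma:secondInverse} gives $f^-_{C_f} = \joinof{f}$, and since $f$ is already join-continuous, Proposition~\ref{prop:meet-cont-closure} (specifically $\joinof{f} = f$ for $f \in \LjI$) yields $f^-_{C_f} = f$. Conversely, starting from a path $C$, the identity $C_{f_C^-} = C$ is exactly the last conclusion of Lemma~\ref{lemma:firstInverse}. This establishes that (1) and (2) are in bijection. Composing with the $\joinof{(-)}\dashv\meetof{(-)}$ bijection of Proposition~\ref{prop:meet-cont-closure} then yields the correspondence with (3); under this composition, a path $C$ is sent to $f_C^+ = \meetof{(f_C^-)}$, which matches the symmetric formulas in Lemma~\ref{lemma:firstInverse} and Lemma~\ref{lemma:secondInverse}.

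There is essentially no serious obstacle here, since every nontrivial fact has been isolated in a preceding lemma; the only point deserving a brief remark is the well-definedness of $f_C^- \in \LjI$, which I would present by invoking \eqref{eq:exprviaadjoints} rather than by recomputing suprema. The argument is therefore quite short, and I would present it as a bookkeeping proof that simply strings together Proposition~\ref{lemma:pathd2}, Lemma~\ref{lemma:firstInverse}, Lemma~\ref{lemma:secondInverse}, and Proposition~\ref{prop:meet-cont-closure}.
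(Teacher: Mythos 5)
Your proposal is correct and follows essentially the same route as the paper: the paper's proof likewise obtains the bijection by combining Lemma~\ref{lemma:firstInverse} (giving $C_{f_C^-}=C$) with Lemma~\ref{lemma:secondInverse} (giving $f^-_{C_f}=\joinof{f}=f$ for $f\in\LjI$), and handles the $\LmI$ side symmetrically via $f_C^+$. Your additional remarks on well-definedness (via \eqref{eq:exprviaadjoints} and Proposition~\ref{lemma:pathd2}) are details the paper leaves implicit but are entirely consistent with its argument.
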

\begin{proof}
  According to Lemmas~\ref{lemma:firstInverse} and
  \ref{lemma:secondInverse}, the correspondence sending a path $C$ to
  $f_{C}^{-} \in \LjI$ has the mapping sending $f$ to $C_{f}$ as an
  inverse. Similarly, the correspondence
  $C \mapsto f_{C}^{+} \in \LmI$ has $f \mapsto C_{f}$ as inverse.
  \qed
\end{proof}

\section{Paths in higher dimensions}
\label{sec:weakBruhat}
\label{sec:pathsDimensionMore}

We show in this Section that paths in dimension $d$, as defined in
Section~\ref{sec:paths}, are in bijective correspondence with
clopen tuples of $\PrLI$, as defined in
Section~\ref{sec:latticesFromQuantales}; therefore, as established in
that Section, there is a lattice $\Ld{\LjI}$ whose underlying set can
be identified with the set of paths in dimension $d$.

\medskip

Let $f \in \PrLI$, so $f = \set{ f_{i,j} \mid 1 \leq i < j \leq
  d}$. We define then, for $1 \leq i < j \leq d$,
\begin{align*}
  f_{j,i} & := \opp{(\,f_{i,j}\,)} = \joinof{(\radj{(f_{i,j})})} \,.
\end{align*}
Moreover, for $i \in [d]$, we let $f_{i,i} := id$.

\begin{definition}
  We say that a tuple $f \in \PrLI$ is \emph{compatible} if
  $f_{j,k} \circ f_{i,j}  \leq f_{i,k}$,
  for each triple of elements $i,j,k \in [d]$.
\end{definition}
\begin{lemma}
  \label{lemma:enrichment}
  A tuple is compatible if and only if it is clopen.
\end{lemma}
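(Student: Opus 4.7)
The plan is to translate the compatibility condition $f_{j,k}\circ f_{i,j}\leq f_{i,k}$ into quantale notation. Since composition in $\LjI$ satisfies $f\otimes g=g\circ f$, compatibility reads $f_{i,j}\otimes f_{j,k}\leq f_{i,k}$. For indices $p>q$ the value $f_{p,q}$ is defined as $\opp{f_{q,p}}$, so any instance of compatibility can be systematically rewritten in terms of $f_{a,b}$, $f_{b,c}$, $f_{a,c}$ (with $a<b<c$) and their duals, and then manipulated using the residuation law $x\otimes y\leq z \iff x\leq z\oplus\opp{y} \iff y\leq \opp{x}\oplus z$ together with the involution $\opp{(x\otimes y)}=\opp{y}\oplus\opp{x}$.

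First I would dispose of triples where two indices coincide. If $i=j$ or $j=k$, one factor equals $id$, the unit of $\otimes$, and the inequality degenerates to an equality. If $i=k\neq j$, we need $f_{i,j}\otimes f_{j,i}\leq id$; since in $\LjI$ the element $id$ is simultaneously the unit of $\oplus$ and $f_{j,i}=\opp{f_{i,j}}$, residuation turns this into $f_{i,j}\leq id\oplus f_{i,j}$, which is automatic. So only triples of pairwise distinct indices carry content.

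For such a triple $\{i,j,k\}$, let $a<b<c$ be its increasing enumeration. I would exploit two symmetries to reduce the six compatibility conditions (one per permutation of $(a,b,c)$) to the two conditions defining clopen. First, compatibility is cyclic-rotation-invariant: by residuation, $f_{x,y}\otimes f_{y,z}\leq f_{x,z}$ is equivalent to $f_{y,z}\leq \opp{f_{x,y}}\oplus f_{x,z}$, and applying $\opp{(-)}$ then rewrites this as $f_{z,x}\otimes f_{x,y}\leq f_{z,y}$, which is the compatibility of the cyclic rotation $(z,x,y)$. Second, the transposition $(a,b,c)\leftrightarrow(a,c,b)$ sends the closed inequality $f_{a,b}\otimes f_{b,c}\leq f_{a,c}$ to $f_{a,c}\otimes\opp{f_{b,c}}\leq f_{a,b}$, which by residuation is precisely the open inequality $f_{a,c}\leq f_{a,b}\oplus f_{b,c}$. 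Consequently the three cyclic rotations of $(a,b,c)$ are all equivalent to closedness on $\{a,b,c\}$, and the three rotations of $(a,c,b)$ are all equivalent to openness, so compatibility on all triples is equivalent to being clopen.

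No step is genuinely delicate; the argument is purely formal in the star-autonomous structure. The only thing to watch is the bookkeeping of residuations and involutions, specifically choosing the residuation rule that moves the dualized factor to the correct side. Once the cyclic symmetry is in hand, the six-case analysis collapses to just two cases.
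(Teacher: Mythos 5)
Your proof is correct and follows essentially the same route as the paper's: dispose of triples with repeated indices using the unit and residuation, then observe that the compatibility pattern is invariant under cyclic rotation (via residuation and the involution), so that the six permutations of a strictly increasing triple collapse into two cyclic orbits corresponding exactly to closedness and openness. The only cosmetic difference is that you package the argument as a chain of equivalences with the representative $(a,c,b)$ for openness, whereas the paper uses $(c,b,a)$ and separates the two implications.
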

\begin{proof}
  For $i < j < k$, compatibility yields
  $f_{i,j}\otimes f_{j,k} \leq f_{i,k}$ (closedness) and
  $f_{k,j}\otimes f_{j,i} \leq f_{k,i}$ which in turn is equivalent to
  $f_{i,k} \leq f_{i,j} \oplus f_{j,k}$ (openness).

  Conversely, suppose that $f$ is clopen. Say that the pattern $(ijk)$
  is satisfied by $f$ if $f_{i,j}\otimes f_{i,j} \leq f_{i,k}$. If
  $\card(\set{i,j,k}) \leq 2$, then $f$ satisfies the pattern $(ijk)$
  if $i=j$ or $j = k$, since then $f_{i,j} = id$ or $f_{j,k} = id$. If
  $i = k$, then $f_{i,j} \otimes f_{j,i} \leq id$ is equivalent to
  $f_{i,j} \leq id \oplus f_{i,j}$. Suppose therefore that
  $\card(\set{i,j,k}) = 3$.

  By assumption, $f$ satisfies $(ijk)$ and $(kji)$ whenever
  $i < j < k$.  Then it is possible to argue that all the patterns on
  the set $\set{i,j,k}$ are satisfied by observing that if $(ijk)$ is
  satisfied, then $(jki)$ is satisfied as well: from
  $f_{i,j}\otimes f_{j,k} \leq f_{i,k}$, derive
  $f_{i,j}\leq f_{i,k}\oplus f_{k,j}$ and then
  $f_{j,k} \otimes f_{k,i} \leq f_{j,i}$.
  \qed
\end{proof}
\begin{remark}
  \label{rem:suffCondClopen}
  Let $f \in \PrLI$ and suppose that, for some $i,j,k \in \setof{d}$,
  with $i < j < k$, $f_{i,k} = f_{i,k} \circ f_{i,j}$. That is, we
  have $f_{i,k} = f_{i,j} \otimes f_{j,k}$ and, using the mix rule, we
  derive $f_{i,k} \leq f_{i,j} \oplus f_{j,k}$.  Dually, a relation of
  the form
  $\MeetOf{f_{i,k}} = \MeetOf{f_{j,k}} \circ \MeetOf{f_{i,j}}$ is
  equivalent to $f_{i,k} = f_{i,j} \oplus f_{j,k}$ and implies
  $f_{i,j} \otimes f_{j,k} \leq f_{i,k}$.
\end{remark}
\begin{remark}
  Lemma~\ref{lemma:enrichment} shows that a clopen tuple of $\PrLI$
  can be extended in a unique way to a \emph{skew} enrichment of the
  set $[n]$ over $\LjI$, see \cite{lawvere,stubbe}. Dually, a clopen
  tuple gives rise to a unique skew metric on the set $[n]$ with
  values in $\LjI$. For a skew enrichment (or metric) we mean, here,
  that the law $f_{j,i} = f_{i,j}^{\star}$ holds; this law, which
  replaces the more usual requirement that a metric is symmetric, has
  been considered e.g. in \cite{pouzet}.
\end{remark}

If $C \subseteq \I^{d}$ is a path, then we shall use
$\pi_{i} : C \rto \I$ to denote the projection onto the $i$-th
coordinate. Then
$\pi_{i,j} := \langle \pi_{i},\pi_{j}\rangle : C \rto \I \times \I$.
\begin{definition}
  \label{def:defvCij}
  For a path $C$ in $\I^{d}$, let us define $v(C) \in \PrLI[d]$ by the
  formula:
  \begin{align}
    \label{eq:defvCij}
    v(C)_{i,j} & := \pi_{j}\circ \ladj{(\pi_{i})}\,, \quad (i,j) \in \couples{d}. 
  \end{align}
\end{definition}
\begin{remark}
  An explicit formula for $v(C)_{i,j}(x)$ is as follows:
  \begin{align}
    \label{eq:explicitvC}
    v(C)_{i,j}(x)
    & = \bigwedge \set{\pi_{j}(y) \in C\mid \pi_{i}(y) = x}\,.
  \end{align}
  Let $C_{i,j}$ be the image of $C$ via the projection
  $\pi_{i,j}$. Then $C_{i,j}$ is a path, since it is the image of a
  bi-continuous function from $\I$ to $ \I \times \I$.  Some simple
  diagram chasing (or the formula in~\eqref{eq:explicitvC}) shows that
  $v(C)_{i,j} = f^{-}_{C_{i,j}}$ as defined in \eqref{eq:defvC}.
\end{remark}

\begin{definition}
  For a compatible $f \in
  \PrLI$, 
  define
  \begin{align*}
    C_{f} & := \set{(x_{1},\ldots ,x_{d}) \mid f_{i,j}(x_{i}) \leq
      x_{j}, \text{ for all } i,j \in [d] }\,.
  \end{align*}
\end{definition}
\begin{remark}
  \label{remark:adj}
  Notice that the condition $f_{i,j}(x) \leq y$ is equivalent (by
  definition of $f_{i,j}$ or $f_{j,i}$) to the condition
  $x \leq \Meetof{f_{j,i}}(y)$.  Thus, there are in principle many
  different ways to define $C_{f}$; in particular, when $d = 2$ (so
  any tuple $\PrLI$ is compatible), the definition given above is
  equivalent to the one given in \eqref{def:CfDimTwo}.
\end{remark}

\begin{proposition}
  $C_{f}$ is a path.
\end{proposition}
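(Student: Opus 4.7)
My strategy is to reduce the statement, as much as possible, to the two-dimensional case already established. The crucial observation is that $\vec{x} \in C_f$ if and only if $(x_i,x_j) \in C_{f_{i,j}}$ for every pair $i<j$: the clause $f_{i,j}(x_i) \leq x_j$ is the lower bound, while the clause $f_{j,i}(x_j) \leq x_i$ is equivalent, via Remark~\ref{remark:adj} (using $f_{j,i} = \opp{f_{i,j}} = \ladj{(\meetof{f_{i,j}})}$ from Lemma~\ref{lemma:meetofjoinof}), to the upper bound $x_j \leq \meetof{f_{i,j}}(x_i)$; together these are exactly the membership condition for the 2D path $C_{f_{i,j}}$ described in~\eqref{def:CfDimTwo}.

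Given this reduction, the chain property is immediate. If $\vec{x},\vec{y} \in C_f$ were incomparable, say with $x_i < y_i$ and $y_j < x_j$, then $(x_i,x_j)$ and $(y_i,y_j)$ would be incomparable points of $C_{f_{i,j}}$, contradicting Proposition~\ref{lemma:pathd2}. Closure under arbitrary joins is direct from the join-continuity of each $f_{i,j}$ and $f_{j,i}$. For closure under meets, I would transpose the inequality $f_{i,j}(x_i^{\alpha}) \leq x_j^{\alpha}$ to $x_i^{\alpha} \leq \radj{(f_{i,j})}(x_j^{\alpha})$ and use that right adjoints preserve arbitrary meets; the dual inequality $f_{j,i}(x_j^{\alpha}) \leq x_i^{\alpha}$ descends through meets by monotonicity alone.

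The main obstacle is density. Given $\vec{x} < \vec{y}$ in $C_f$, pick $i$ with $x_i < y_i$ and $t \in \I$ with $x_i < t < y_i$, and define $\vec{z}$ by $z_i := t$ and $z_j := f_{i,j}(t)$ for $j \neq i$. That $\vec{z} \in C_f$ splits into three cases of the condition $f_{k,l}(z_k) \leq z_l$: when $k = i$ it is an equality; when $l = i$ and $k \neq i$ it becomes $f_{k,i}(f_{i,k}(t)) \leq t$, which is the instance $f_{k,i}\circ f_{i,k} \leq f_{i,i} = id$ of compatibility; and when $k,l \neq i$ it becomes $f_{k,l}(f_{i,k}(t)) \leq f_{i,l}(t)$, which is the instance $f_{k,l} \circ f_{i,k} \leq f_{i,l}$ of compatibility. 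The strict inequalities $\vec{x} < \vec{z} < \vec{y}$ at coordinate $i$ follow from the choice of $t$.

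For $j \neq i$, the upper bound $z_j = f_{i,j}(t) \leq f_{i,j}(y_i) \leq y_j$ is immediate by monotonicity, but the lower bound is the subtle point: from $(x_i,x_j) \in C_{f_{i,j}}$ together with the defining formula $\meetof{f_{i,j}}(x_i) = \bigwedge_{x_i < s} f_{i,j}(s)$ one obtains $x_j \leq \meetof{f_{i,j}}(x_i) \leq f_{i,j}(t)$, the last step since $x_i < t$ is itself an admissible $s$. Recognizing that the strict inequality $x_i < t$ feeds directly into the meet defining $\meetof{f_{i,j}}(x_i)$ is the only genuinely nontrivial insight, and is precisely what turns the algebraic compatibility structure into a topological density statement.
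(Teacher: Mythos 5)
Your proof is correct and follows essentially the same route as the paper's: the same three properties (totality, closure under arbitrary meets and joins, density) are verified, and your density argument --- constructing $z$ with $z_i=t$ and $z_j=f_{i,j}(t)$, then bounding $x_j\leq \meetof{f_{i,j}}(x_i)\leq f_{i,j}(t)$ via Remark~\ref{remark:adj} --- is precisely the content of Lemmas~\ref{lemma:Crightadj} and~\ref{lemma:Cfdense}. The only (harmless) repackaging is that you obtain totality by projecting onto the two-dimensional paths $C_{f_{i,j}}$ and closure under meets via right adjoints, where the paper argues both directly by monotonicity.
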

The proposition is an immediate consequence of the following
Lemmas~\ref{lemma:Cftotal}, \ref{lemma:Cfcomplete} and
\ref{lemma:Cfdense}.

\begin{lemma}
  \label{lemma:Cftotal}
  $C_{f}$ is a total order.
\end{lemma}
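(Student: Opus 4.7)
My plan is to reduce the statement to the two-dimensional case already handled in Proposition~\ref{lemma:pathd2} by projecting onto pairs of coordinates. Concretely, for any $(i,j) \in \couples{d}$ I will show that if $(x_1,\ldots,x_d) \in C_f$ then $(x_i,x_j)$ lies in the planar set $C_{f_{i,j}}$ defined by formula~\eqref{def:CfDimTwo}, which by Proposition~\ref{lemma:pathd2} is a chain in $\I^2$.

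The translation works as follows. Unfolding the definition of $C_f$ at the pair $(i,j)$ and at $(j,i)$, membership of $(x_1,\ldots,x_d)$ in $C_f$ entails both $f_{i,j}(x_i) \leq x_j$ and $f_{j,i}(x_j) \leq x_i$. Since $f_{j,i} = \opp{f_{i,j}} = \ladj{(\Meetof{f_{i,j}})}$, Remark~\ref{remark:adj} turns the second condition into $x_j \leq \Meetof{f_{i,j}}(x_i)$. Recalling that $f_{i,j} \in \LjI$ so $\joinof{f_{i,j}} = f_{i,j}$, the two conditions together read
\[
  \joinof{f_{i,j}}(x_i) \leq x_j \leq \Meetof{f_{i,j}}(x_i)\,,
\]
which is precisely the condition $(x_i,x_j) \in C_{f_{i,j}}$ from~\eqref{def:CfDimTwo}.

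With this in hand, the proof of totality is immediate. Suppose for contradiction that $x,y \in C_f$ are incomparable in the product order on $\I^d$. Then there exist coordinates $k,l$ with $x_k < y_k$ and $x_l > y_l$; set $i := \min(k,l)$ and $j := \max(k,l)$, so $(i,j) \in \couples{d}$ and after relabelling we still have a pair with $x_i < y_i$ and $x_j > y_j$ (or $x_i > y_i$ and $x_j < y_j$). In either case $(x_i,x_j)$ and $(y_i,y_j)$ are incomparable in $\I^2$. But both points belong to $C_{f_{i,j}}$ by the previous paragraph, contradicting the fact that $C_{f_{i,j}}$ is a chain. Hence $C_f$ is totally ordered.

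No serious obstacle is expected here: the heavy lifting (that any planar $C_g$ with $g \in \LjI$ is linearly ordered) was done in Proposition~\ref{lemma:pathd2}, and compatibility of $f$ is not even needed for this lemma---only the cyclic $\star$-autonomous structure is used, through Remark~\ref{remark:adj}, to reconcile the two equivalent forms of the defining inequalities of $C_f$. Compatibility will, of course, become essential in the companion lemmas establishing completeness and density of $C_f$.
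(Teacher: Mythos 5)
Your proof is correct, but it takes a genuinely different route from the paper's. The paper argues directly in $\I^{d}$: assuming $x \not\leq y$, it picks a coordinate (w.l.o.g.\ the first) with $y_{1} < x_{1}$ and chains $y_{i} \leq \Meetof{f_{1,i}}(y_{1}) \leq f_{1,i}(x_{1}) \leq x_{i}$ for every $i$, concluding $y < x$; here the first inequality is the adjoint reformulation of $f_{i,1}(y_{i}) \leq y_{1}$ via Remark~\ref{remark:adj}, and the second follows from $y_{1} < x_{1}$ and the definition of $\Meetof{(\cdot)}$. You instead project onto each pair of coordinates, observe that the two membership conditions $f_{i,j}(x_{i}) \leq x_{j}$ and $f_{j,i}(x_{j}) \leq x_{i}$ combine---again via Remark~\ref{remark:adj} and $\joinof{f_{i,j}} = f_{i,j}$---into $(x_{i},x_{j}) \in C_{f_{i,j}}$ in the planar sense of \eqref{def:CfDimTwo}, and then invoke the linearity part of Proposition~\ref{lemma:pathd2}. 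Both arguments rest on the same two ingredients (the adjunction $f_{j,i} = \ladj{(\Meetof{f_{i,j}})}$ and the fact that $u < v$ implies $\Meetof{g}(u) \leq \joinof{g}(v)$), and you are right that compatibility of $f$ plays no role here---the paper's proof does not use it either. Your version is more modular, recycling the two-dimensional case instead of redoing the computation; the paper's is self-contained and yields the slightly sharper formulation that $x \not\leq y$ already forces $y < x$, rather than deriving totality by contradiction. Your reduction step is essentially the content of the last sentence of Remark~\ref{remark:adj}, transported from $d = 2$ to an arbitrary pair of coordinates, so everything you rely on is available at this point of the paper.
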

\begin{proof}
  Let $x,y \in C_{f}$ and suppose that $x\not\leq y$, so there exists
  $i \in [d]$ such that $x_{i} \not\leq y_{i}$. W.l.o.g. we can
  suppose that $i = 1$, so $y_{1} < x_{1}$ and then, for $i > 1$, we have 
  $\Meetof{f_{1,i}}(y_{1}) \leq f_{1,i}(x_{1})$, whence $y_{i} \leq
  \Meetof{f_{1,i}}(y_{1}) \leq f_{1,i}(x_{1}) \leq x_{1}$.
  This shows that $y < x$.
  \qed
\end{proof}

\begin{lemma}
  \label{lemma:Cfcomplete}
  $C_{f}$ is closed under arbitrary meets and joins.
\end{lemma}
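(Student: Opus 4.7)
My plan is to verify directly, coordinate by coordinate, that the defining inequalities $f_{i,j}(x_i)\leq x_j$ are preserved under arbitrary meets and joins in $\I^{d}$. The observation driving the proof is that every $f_{i,j}$ (including the cases $i>j$, where $f_{i,j}=\joinof{(\radj{f_{j,i}})}$, and $i=j$, where $f_{i,i}=id$) is an element of $\LjI$, hence join-continuous and in particular monotone.

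For the join case, fix a family $\{x^{(\alpha)}\}_{\alpha}\subseteq C_{f}$ and set $x:=\bigvee_{\alpha} x^{(\alpha)}$, computed coordinatewise. For each pair $i,j\in[d]$ I would invoke join-continuity of $f_{i,j}$ to rewrite $f_{i,j}(x_i)=\bigvee_{\alpha}f_{i,j}(x^{(\alpha)}_i)$ and then apply the hypothesis $f_{i,j}(x^{(\alpha)}_i)\leq x^{(\alpha)}_j$ termwise, so that the whole join is bounded above by $\bigvee_{\alpha}x^{(\alpha)}_j=x_j$. That produces $x\in C_{f}$.

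For the meet case, set $x:=\bigwedge_{\alpha}x^{(\alpha)}$. Here join-continuity is not the relevant ingredient; monotonicity alone suffices. Indeed, for every $\alpha$ one has $f_{i,j}(x_i)\leq f_{i,j}(x^{(\alpha)}_i)\leq x^{(\alpha)}_j$, and taking the meet on the right gives $f_{i,j}(x_i)\leq x_j$. Alternatively, one can appeal to Remark~\ref{remark:adj} and reformulate the defining condition as $x_i\leq\Meetof{f_{j,i}}(x_j)$, then use meet-continuity of $\Meetof{f_{j,i}}$; this is the dual of the argument used for joins.

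Since both verifications are just two-line computations resting on general features of $\LjI$, I do not expect a genuine obstacle; the slight subtlety, really just bookkeeping, is to confirm that $f_{i,j}$ is in $\LjI$ for the cases $j<i$ and $j=i$, which is immediate from the definitions $f_{j,i}=\joinof{(\radj{f_{i,j}})}$ and $f_{i,i}=id$. No appeal to compatibility/clopenness of $f$ is needed for this lemma — the hypotheses enter only in the other lemmas establishing totality and density.
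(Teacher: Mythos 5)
Your proof is correct and follows essentially the same route as the paper: join-continuity of each $f_{i,j}$ handles joins, and monotonicity alone handles meets, coordinatewise. Your side remarks (that $f_{i,j}\in\LjI$ also for $j\leq i$, and that compatibility is not used here) are accurate and consistent with the paper's argument.
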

\begin{proof}
  Let $\set{x^{\ell} \mid \ell \in I}$ be a family of tuples in
  $C_{f}$.  For all $i,j \in \setof{d}$ and $\ell \in I$, we have
  $f_{i,j}(\bigwedge_{\ell \in I} x^{\ell}_{i})
   \leq f_{i,j}(x^{\ell}_{i}) \leq
   x^{\ell}_{j} $,
  and therefore
  $f_{i,j}(\bigwedge_{\ell \in I} x^{\ell}_{i}) \leq \bigwedge_{\ell
    \in I}x^{\ell}_{j}$. Since meets in $\I^{d}$ are computed
  coordinate-wise, this shows that $C_{f}$ is closed under arbitrary
  meets.  Similarly,
  $ f_{i,j}(x^{\ell}_{i}) \leq \bigvee_{\ell \in I}x^{\ell}_{j}$ and
  \begin{align*}
    f_{i,j}(\bigvee_{\ell \in I} x^{\ell}_{i}) & = \bigvee_{\ell \in
      I} f_{i,j}(x^{\ell}_{i}) \leq \bigvee_{\ell \in I}x^{\ell}_{j}
    \,,
  \end{align*}
  so $C_{f}$ is also closed under arbitrary joins.
  \qed
\end{proof}
\begin{lemma}
  \label{lemma:Crightadj}
  Let $f \in \PrLI$ be compatible.  Let $i_{0} \in \setof{d}$ and
  $x_{0} \in \I$; define $x \in \I^{d}$ by setting
  $x_{i} := f_{i_{0},i}(x_{0})$ for each $i \in \setof{d}$.
  Then $x \in C_{f}$ and $x = \bigwedge \set{y \in C_{f} \mid
    \pi_{i_{0}}(y) = x_{0} }$.
\end{lemma}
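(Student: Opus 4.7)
The plan is to split the claim into its two natural components: first, that $x$ itself belongs to the fiber $\set{y \in C_{f} \mid \pi_{i_{0}}(y) = x_{0}}$, and second, that $x$ is a lower bound for every element of this fiber. Both parts will reduce to direct unpacking of definitions once compatibility is applied.

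For the first part, I would begin by observing that $x_{i_{0}} = f_{i_{0},i_{0}}(x_{0}) = x_{0}$ thanks to the convention $f_{i,i} = id$, so $\pi_{i_{0}}(x) = x_{0}$ is automatic. To see that $x \in C_{f}$, I must check the defining inequalities $f_{i,j}(x_{i}) \leq x_{j}$ for every pair $i,j \in \setof{d}$. Substituting $x_{i} = f_{i_{0},i}(x_{0})$ and $x_{j} = f_{i_{0},j}(x_{0})$, this reduces to
\begin{align*}
  (f_{i,j} \circ f_{i_{0},i})(x_{0}) & \leq f_{i_{0},j}(x_{0})\,,
\end{align*}
which is an instance of the compatibility inequality $f_{i,j} \circ f_{i_{0},i} \leq f_{i_{0},j}$ applied to the triple $(i_{0},i,j)$. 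Here it is crucial that the extended family $\set{f_{i,j} \mid i,j \in [d]}$ (with $f_{j,i} = f_{i,j}^{\star}$ and $f_{i,i} = id$) satisfies compatibility for \emph{all} triples, not only those with $i_{0} < i < j$, as guaranteed by Lemma~\ref{lemma:enrichment} together with the argument given in its proof.

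For the second part, I would fix any $y \in C_{f}$ with $\pi_{i_{0}}(y) = x_{0}$ and show $x \leq y$ coordinatewise. Since $y \in C_{f}$, the particular inequality $f_{i_{0},i}(y_{i_{0}}) \leq y_{i}$ holds for each $i \in \setof{d}$; substituting $y_{i_{0}} = x_{0}$ gives $x_{i} = f_{i_{0},i}(x_{0}) \leq y_{i}$, exactly as needed. Combined with the first part, this shows that $x$ is the minimum of the fiber, hence equals its meet.

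There is no substantive obstacle: the lemma is essentially a tautological consequence of the compatibility condition, packaged to say that the tuple obtained by transporting $x_{0}$ along the $f_{i_{0},-}$ is the pointwise-least element of $C_{f}$ on the fiber over $x_{0}$. The only mild subtlety is ensuring that compatibility is used with the right orientation of the triple, which is taken care of by the symmetric reformulation in Lemma~\ref{lemma:enrichment}.
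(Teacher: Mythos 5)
Your proof is correct and follows essentially the same route as the paper's: compatibility applied to the triple $(i_{0},i,j)$ gives $x \in C_{f}$, the convention $f_{i_{0},i_{0}} = id$ gives $\pi_{i_{0}}(x) = x_{0}$, and the defining inequalities of $C_{f}$ give that $x$ is a lower bound of the fiber. The only difference is cosmetic: since the paper's definition of compatibility already quantifies over \emph{all} triples in $\setof{d}$, your appeal to Lemma~\ref{lemma:enrichment} for the ``orientation'' of the triple is unnecessary, though harmless.
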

\begin{proof}
  Since $f$ is compatible, $ f_{i,j} \circ f_{i_{0},i} \leq
  f_{i_{0},j}$, for each $i, j \in\setof{d}$, so
  \begin{align*}
     f_{i,j}(x_{i}) & = f_{i,j}(f_{i_{0},i}(x_{0})) \leq 
     f_{i_{0},j}(x_{0})  = x_{j}\,.
  \end{align*}
  Therefore, $x \in C_{f}$. Observe that since $f_{i_{0},i_{0}} =
  id$, we have $x_{i_{0}} = x_{0}$ and
  $x$ so defined is such that $\pi_{i_{0}}(x) = x_{0}$.
  On the other hand, if $y \in C_{f}$ and $x_{0} \leq \pi_{i_{0}}(y)
  =y_{i_{0}}$, then $x_{i} = f_{i,i_{0}}(x_{0}) \leq
  f_{i,i_{0}}(y_{i_{0}}) \leq y_{i}$, for all $i
  \in\setof{d}$.  Thus $x = \bigwedge \set{y \in C_{f} \mid
    \pi_{i_{0}}(y) = x_{0} }$. \qed
\end{proof}
\begin{lemma}
  \label{lemma:Cfdense}
  $C_{f}$ is dense.
\end{lemma}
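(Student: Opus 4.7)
The plan is to exploit Lemma~\ref{lemma:Crightadj}, which manufactures elements of $C_f$ from the value of a single coordinate, combined with the density of $\I$ and the fact that $C_f$ is totally ordered (Lemma~\ref{lemma:Cftotal}).

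Let $x, y \in C_f$ with $x < y$. Since the order on $\I^d$ is coordinate-wise and $x \neq y$, there exists $i_0 \in \setof{d}$ with $x_{i_0} < y_{i_0}$. By density of the real interval $\I$, pick $x_0 \in \I$ with $x_{i_0} < x_0 < y_{i_0}$. Applying Lemma~\ref{lemma:Crightadj} to $i_0$ and $x_0$, we obtain a point $z \in C_f$ defined by $z_i := f_{i_0,i}(x_0)$ and, in particular, satisfying $z_{i_0} = x_0$ (since $f_{i_0,i_0} = id$).

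It remains to check that $x < z < y$ strictly. Because $C_f$ is a totally ordered set by Lemma~\ref{lemma:Cftotal}, the elements $x$ and $z$ are comparable, and similarly for $z$ and $y$. If $z \leq x$ we would have $z_{i_0} \leq x_{i_0}$, contradicting $x_{i_0} < x_0 = z_{i_0}$; hence $x < z$. Dually, $y \leq z$ would force $y_{i_0} \leq z_{i_0} = x_0$, contradicting $x_0 < y_{i_0}$; hence $z < y$.

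There is no real obstacle here: once Lemma~\ref{lemma:Crightadj} is available, it supplies the witness for density almost for free, and totality of $C_f$ upgrades the coordinate-wise strict inequality at $i_0$ into a strict inequality in $C_f$. The only subtle point worth flagging is the need to invoke Lemma~\ref{lemma:Cftotal}: without it, one could produce a $z \in C_f$ with $z_{i_0}$ strictly between $x_{i_0}$ and $y_{i_0}$ but with no guarantee that $z$ is globally comparable to $x$ and $y$. Totality bypasses this concern entirely.
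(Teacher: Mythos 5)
Your proof is correct. The witness construction is the same as the paper's: pick a coordinate $i_0$ with $x_{i_0}<y_{i_0}$, choose $z_0$ strictly between them, and let $z\in C_f$ be the element supplied by Lemma~\ref{lemma:Crightadj}, so that $z_{i_0}=z_0$. Where you diverge is in verifying $x<z<y$. The paper does this by a direct coordinate-wise computation: it shows $z_i\le y_i$ from monotonicity of $f_{i_0,i}$, and $x_i\le z_i$ by combining the strict inequality $x_{i_0}<z_0$ with the adjoint-style relation of Remark~\ref{remark:adj}, thereby never invoking totality of $C_f$. You instead appeal to Lemma~\ref{lemma:Cftotal}: since $C_f$ is a chain, $z$ is comparable to both $x$ and $y$, and the single strict inequality at coordinate $i_0$ rules out $z\le x$ and $y\le z$. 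Both arguments are sound; yours is shorter and delegates the coordinate-wise work to the totality lemma, whose proof already contains essentially the same adjointness computation, while the paper's version keeps the density proof self-contained at the level of coordinates. Your closing remark correctly identifies that totality is the ingredient that upgrades comparability at one coordinate to comparability in $C_f$.
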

\begin{proof}
  Let $x,y \in C_{f}$ and suppose that $x < y$, so there exists $i_{0}
  \in \setof{d}$ such that $x_{i_{0}}< y_{i_{0}}$. Pick $z_{0} \in
  \I$ such that $x_{i_{0}} < z_{0} < y_{i_{0}}$ and define
  $z \in  C_{f}$ as in  Lemma~\ref{lemma:Crightadj}, 
  $z_{i} := f_{i_{0},i}(z_{0})$, for
  all $i \in \setof{d}$.
  We claim that $x_{i} \leq z_{i} \leq y_{i}$, for each $i \in
  \setof{d}$. From this and $x_{i_{0}} < z_{i_{0}} <
  y_{0}$ it follows that $x < z < y$. Indeed, we have
  $z_{i}  = f_{i_{0},i}(z_{0}) \leq f_{i_{0},i}(y_{i_{0}}) \leq
    y_{i}$.
    Moreover, $x_{i_{0}} <
    z_{0}$ implies $\Meetof{f_{i_{0},i}}(x_{i_{0}}) \leq
    f_{i_{0},i}(z_{0})$; by Remark~\ref{remark:adj}, we have $x_{i }
    \leq
    \Meetof{f_{i_{0},i}}(x_{i_{0}})$. Therefore, we also have $x_{i}
    \leq \Meetof{f_{i_{0},i}}(x_{i_{0}}) \leq f_{i_{0},i}(z_{0}) =
    z_{i}$.
    \qed
\end{proof}

\begin{lemma}
  \label{lemma:vCfeqf}
  If $f \in \PrLI$ is compatible, then $v(C_{f}) = f$.
\end{lemma}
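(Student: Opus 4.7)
The lemma is essentially a corollary of Lemma~\ref{lemma:Crightadj}, obtained by composing the explicit formula for meets in $C_f$ with the projection $\pi_j$. The plan is to fix $(i,j) \in \couples{d}$ and $x \in \I$, and to compute
\[
v(C_f)_{i,j}(x) = \pi_j(\ladj{(\pi_i)}(x))
\]
by producing a closed-form expression for $\ladj{(\pi_i)}(x)$.

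To apply the surjective version of the adjoint formula \eqref{eq:adjsurjective}, I would first check that $\pi_i : C_f \rto \I$ is a meet-continuous surjection. Surjectivity is immediate from Lemma~\ref{lemma:Crightadj} applied with $i_0 = i$ and $x_0 = x$: the tuple $z \in C_f$ defined by $z_k := f_{i,k}(x)$ for all $k \in [d]$ satisfies $z_i = f_{i,i}(x) = x$, using the convention $f_{i,i} = id$. Meet-continuity follows from Lemma~\ref{lemma:Cfcomplete}, since meets in $C_f$ are precisely the coordinate-wise meets inherited from $\I^d$, hence preserved by each projection. Therefore $\ladj{(\pi_i)}$ exists and, by \eqref{eq:adjsurjective},
\[
\ladj{(\pi_i)}(x) = \bigwedge\set{y \in C_f \mid \pi_i(y) = x}.
\]

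Lemma~\ref{lemma:Crightadj} then identifies this meet with the tuple $z$ above. Projecting onto the $j$-th coordinate yields
\[
v(C_f)_{i,j}(x) = \pi_j\bigl(\ladj{(\pi_i)}(x)\bigr) = z_j = f_{i,j}(x),
\]
and since $(i,j) \in \couples{d}$ and $x \in \I$ were arbitrary, we conclude $v(C_f) = f$. There is no real obstacle here beyond minor bookkeeping around the convention $f_{i,i} = id$ (needed for surjectivity of $\pi_i$); the main geometric content is already packaged in Lemma~\ref{lemma:Crightadj}.
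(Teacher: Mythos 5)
Your proof is correct and follows essentially the same route as the paper: both identify $\ladj{(\pi_i)}(x)$ with the tuple $(f_{i,1}(x),\ldots,f_{i,d}(x))$ via Lemma~\ref{lemma:Crightadj} and then project onto the $j$-th coordinate. You simply make explicit the surjectivity and meet-continuity checks that the paper leaves implicit.
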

\begin{proof}
  By Lemma~\ref{lemma:Crightadj}, the correspondence sending
  $x$ to $(f_{i,1}(x),\ldots
  ,f_{d,1}(x))$ is left adjoint to the projection $\pi_{i} : C_{f}
  \rto
  \I$. 
  In turn, this gives that $v(C_{f})_{i,j}(x) =
  \pi_{j}(\ladj{(\pi_{i})}(x)) = f_{i,j}(x)$, for any $i,j \in
  \setof{d}$. It follows that $v(C_{f}) = f$.  \qed
\end{proof}
\begin{lemma}
  \label{lemma:CvCeqC}
  For $C$ a path in $\I^{d}$,  we have $C_{v(C)} = C$.
\end{lemma}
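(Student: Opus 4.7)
The plan is to establish first the inclusion $C \subseteq C_{v(C)}$ and then to leverage the maximality of $C$ as a chain in $\I^{d}$ (guaranteed by the characterisation of paths in Section~\ref{sec:paths}) together with the fact that $C_{v(C)}$ is itself a chain; the latter will follow at once from Lemma~\ref{lemma:Cftotal}, whose proof does not invoke compatibility and relies only on the convention $v(C)_{j,i} := \opp{v(C)_{i,j}}$ for $i < j$ and $v(C)_{i,i} = id$.

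For the inclusion I fix $x \in C$ and check $v(C)_{i,j}(x_{i}) \leq x_{j}$ for every pair $i,j \in [d]$. The case $i = j$ is trivial. For $i < j$, the adjunction $\ladj{\pi_{i}} \dashv \pi_{i}$ combined with $x_{i} \leq \pi_{i}(x) = x_{i}$ yields $\ladj{\pi_{i}}(x_{i}) \leq x$ in $C$, and applying the monotone map $\pi_{j}$ gives $v(C)_{i,j}(x_{i}) = \pi_{j}(\ladj{\pi_{i}}(x_{i})) \leq \pi_{j}(x) = x_{j}$. For $i > j$, where $v(C)_{i,j} = \opp{v(C)_{j,i}}$, I will use Remark~\ref{remark:adj} to transpose the required inequality into $x_{i} \leq \bigwedge_{x_{j} < z} v(C)_{j,i}(z) = \bigwedge_{x_{j} < z} \pi_{i}(\ladj{\pi_{j}}(z))$; it then suffices to argue $\pi_{i}(\ladj{\pi_{j}}(z)) \geq x_{i}$ for every $z > x_{j}$. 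If $\ladj{\pi_{j}}(z)$ were below $x$ in $C$, we would obtain $z \leq \pi_{j}(\ladj{\pi_{j}}(z)) \leq \pi_{j}(x) = x_{j}$, contradicting $z > x_{j}$; since $C$ is a chain this forces $\ladj{\pi_{j}}(z) \geq x$, and monotonicity of $\pi_{i}$ yields $\pi_{i}(\ladj{\pi_{j}}(z)) \geq \pi_{i}(x) = x_{i}$, as wanted.

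With $C \subseteq C_{v(C)}$ in hand, Lemma~\ref{lemma:Cftotal} makes $C_{v(C)}$ a chain in $\I^{d}$ under the product order, and the maximality of $C$ among such chains immediately forces $C_{v(C)} = C$. The main obstacle I anticipate is the case $i > j$ of the inclusion: unlike its counterpart $i < j$, which is pure adjunction juggling, it requires first dualising through Remark~\ref{remark:adj} and then proving the resulting meet-inequality by contradiction using the totality of $C$. Everything else is formal manipulation of adjoints plus a single appeal to maximality.
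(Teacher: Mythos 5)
Your proof is correct and follows essentially the same route as the paper's: establish $C \subseteq C_{v(C)}$ via the counit $\ladj{(\pi_{i})}(\pi_{i}(c)) \leq c$ and then invoke maximality of $C$ as a chain. The only divergence is that the paper handles all pairs $i,j$ uniformly (implicitly using that $\opp{(\pi_{j}\circ\ladj{(\pi_{i})})} = \pi_{i}\circ\ladj{(\pi_{j})}$, which follows from the dimension-$2$ analysis of Section~\ref{sec:pathsDimensionTwo}), whereas you re-derive the case $i > j$ by transposing through Remark~\ref{remark:adj} and using totality of $C$ --- a slightly longer but equally valid computation, and your explicit remark that Lemma~\ref{lemma:Cftotal} yields chain-hood of $C_{v(C)}$ without assuming compatibility is a welcome precision that the paper leaves tacit.
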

\begin{proof}
  Let us show that $C \subseteq C_{v(C)}$. Let $c \in C$;  notice that
  for each $i, j \in\setof{d}$, we have 
  \begin{align*}
    v(C)_{i,j}(c_{i}) & = \pi_{j}(\ladj{(\pi_{i})}(c_{i}))
    = \pi_{j}(\ladj{(\pi_{i})}(\pi_{i}(c)) \leq \pi_{j}(c) = c_{j}\,,
  \end{align*}
  so $c \in C_{v(C)}$. For the converse inclusion, notice that
  $C \subseteq C_{v(C)}$ implies $C = C_{v(C)}$,  since every path is
  a maximal chain.
  \qed
\end{proof}

Putting together Lemmas~\ref{lemma:vCfeqf} and \ref{lemma:CvCeqC} we
obtain:
\begin{theorem}
  The correspondences, sending a path $C$ in $\I^{d}$ to the tuple
  $v(C)$, and a compatible tuple $f$ to the path $C_{f}$, are inverse
  bijections.
\end{theorem}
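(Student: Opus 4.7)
The plan is straightforward: the theorem is essentially a direct corollary of the two preceding lemmas, modulo one small type-checking verification. Lemma~\ref{lemma:vCfeqf} already establishes that $v(C_{f}) = f$ for every compatible $f \in \PrLI$, and Lemma~\ref{lemma:CvCeqC} establishes that $C_{v(C)} = C$ for every path $C$ in $\I^{d}$. Together these state exactly that the two assignments compose to the identity in both directions, which is the bijection claim. The only gap to close is that the construction $f \mapsto C_{f}$ was defined only on compatible tuples, so before we can even form $C_{v(C)}$ we must verify that $v(C)$ is compatible whenever $C$ is a path.

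To check compatibility of $v(C)$, I would unfold the definition $v(C)_{i,j} = \pi_{j} \circ \ladj{(\pi_{i})}$ from Definition~\ref{def:defvCij} and use the counit inequality $\ladj{(\pi_{j})} \circ \pi_{j} \leq \mathrm{id}_{C}$ coming from the adjunction $\ladj{(\pi_{j})} \dashv \pi_{j}$. For any $i,j,k \in \setof{d}$ we then obtain
$$v(C)_{j,k} \circ v(C)_{i,j} \;=\; \pi_{k} \circ \ladj{(\pi_{j})} \circ \pi_{j} \circ \ladj{(\pi_{i})} \;\leq\; \pi_{k} \circ \ladj{(\pi_{i})} \;=\; v(C)_{i,k},$$
which is precisely the compatibility condition. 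By Lemma~\ref{lemma:enrichment}, this is equivalent to $v(C)$ being clopen, so $v(C)$ does live in $\Ld{\LjI}$ and $C_{v(C)}$ is well-defined.

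With this verification in place, the theorem follows purely formally: the two maps $v(-)$ and $C_{(-)}$ are mutually inverse bijections between paths in $\I^{d}$ and compatible (equivalently, clopen) tuples of $\PrLI$. There is no genuine obstacle here, since the substantive work — constructing the path from a compatible tuple (Lemmas \ref{lemma:Cftotal}, \ref{lemma:Cfcomplete}, \ref{lemma:Cfdense}) and recovering the tuple and the path (Lemmas~\ref{lemma:vCfeqf} and \ref{lemma:CvCeqC}) — has already been carried out. The present statement is the clean packaging that identifies the set of paths in $\I^{d}$ with the underlying set of the lattice $\Ld{\LjI}$.
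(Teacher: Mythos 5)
Your proposal is correct and takes essentially the same route as the paper, whose own proof of this theorem is literally the one-line assembly of Lemma~\ref{lemma:vCfeqf} and Lemma~\ref{lemma:CvCeqC}; the well-typedness check that $v(C)$ is compatible is a worthwhile addition that the paper leaves implicit (it is needed, e.g., for $C_{v(C)}$ to be a chain in the maximality step of Lemma~\ref{lemma:CvCeqC}). One small caveat on your counit computation: Definition~\ref{def:defvCij} gives $v(C)_{i,j}=\pi_{j}\circ\ladj{(\pi_{i})}$ only for $(i,j)\in\cd$, while for $j<i$ the component $v(C)_{i,j}$ is \emph{defined} as $\opp{(v(C)_{j,i})}$, so your argument covers closedness directly but for openness (equivalently, the reversed pattern in Lemma~\ref{lemma:enrichment}) you must first identify $\opp{(\pi_{j}\circ\ladj{(\pi_{i})})}$ with $\pi_{i}\circ\ladj{(\pi_{j})}$ --- which follows from Lemma~\ref{lemma:firstInverse} applied to the two-dimensional projections $C_{i,j}$ of $C$.
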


\section{Structure of the lattices $\LId$}
\label{sec:structure}

As final remarks, we present and discuss some structural properties of
the lattices $\LId$.

Recall that an element $p$ of a lattice $L$ is \emph{\jp} if, for any
\emph{finite} family $\set{x_{i} \mid i \in I}$,
$p \leq \bigvee_{i \in I} x_{i}$ implies $p \leq x_{i}$, for some
$i \in I$. A \emph{\cjp} element is defined similarly, by considering
arbitrary families in place of finite ones.  An element $p$ of a
lattice $L$ is \emph{\jirr} if, for any \emph{finite} family
$\set{x_{i} \mid i \in I}$, $p = \bigvee_{i \in I} x_{i}$ implies
$p = x_{i}$, for some $i \in I$; \emph{\cjirr} elements are defined
similarly, by considering arbitrary families.  If $p$ is \jp, then it
is also \jirr, and the two notions coincide on distributive lattices.

\paragraph{Join-prime elements of $\LjI$.}
We begin by describing the \jp elements of $\LjI$; this lattice being
distributive, \jp and \jirr elements coincide. For $x,y \in \I$, let
us put
\begin{align*}
  \ji{x,y}(t)
  & :=
  \begin{cases}
    \,0\,, & 0\leq t \leq x\,, \\
    \,y\,, &  x < t \,,
  \end{cases}
  &
  \JI{x,y}(t)
  & :=
  \begin{cases}
    \,0\,, & 0\leq t < x\,, \\
    \,y\,, & x \leq t < 1 \,, \\
    \, 1\,, & t = 1\,.
  \end{cases}
\end{align*}
so $\ji{x,y} \in \LjI$, $\JI{x,y} \in \LmI$ and $\JI{x,y} =
\Ji{x,y}$.
We call a function of the form $\ji{x,y}$  a \emph{\osf}.
Notice that if $x = 1$ or $y = 0$, then $\ji{x,y}$ is the constant
function that takes $0$ as its unique value; said otherwise,
$\ji{x,y} = \bot$.  We say that $\ji{x,y}$ is a \emph{prime \osf} if
$x < 1$ and $0 < y$; we say that $\ji{x,y}$ is \emph{rational} if
$x,y \in \IQ$.

\begin{proposition}
  \label{prop:join-prime}
  Prime \osf{s} are exactly the join-prime elements of
  $\LjI$.
\end{proposition}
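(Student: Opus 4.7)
The plan is to prove the two implications separately; the main obstacle will be verifying join-continuity of two elementary surgeries of a given $f\in\LjI$.

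For the first direction, given $\ji{x,y}\le g_1\vee\cdots\vee g_n$ with $x<1$ and $y>0$, I would observe that for each $t>x$ the finite supremum $\bigvee_i g_i(t)$ dominates $y$, so $I_t:=\{\,i\mid g_i(t)\ge y\,\}$ is a nonempty subset of $\{1,\ldots,n\}$; monotonicity of each $g_i$ gives $I_t\subseteq I_{t'}$ whenever $t<t'$. The family $(I_t)_{t>x}$ is then a shrinking chain of nonempty subsets of a finite set as $t\searrow x$, so it stabilizes: there exists $t_0>x$ with $I_{t_0}=\bigcap_{t>x}I_t\ne\emptyset$, and any $i^*\in I_{t_0}$ satisfies $g_{i^*}\ge \ji{x,y}$.

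For the converse, let $f\in\LjI$ be join-prime; then $f\ne\bot$, since $\bot=\bigvee\emptyset$ cannot dominate a join-prime element. Put $x^*:=\sup\{\,t\mid f(t)=0\,\}$; join-continuity gives $f(x^*)=\bigvee_{t<x^*}f(t)=0$, so that $f(t)=0$ for $t\le x^*$ and $f(t)>0$ for $t>x^*$, and $f\ne\bot$ forces $x^*<1$. I would reduce the rest of the argument to showing that $f$ is constant on $(x^*,1]$: indeed, writing $y:=f(1)>0$, this would give $f=\ji{x^*,y}$, a prime \osf.

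The hard part is the contrapositive: proving that if $f$ is non-constant on $(x^*,1]$ then $f$ is not join-prime. Given $t_1<t_2$ in $(x^*,1]$ with $f(t_1)<f(t_2)$, I would pick $y'\in(f(t_1),f(t_2))$ and set $s_1:=\sup\{\,t\mid f(t)\le y'\,\}$, so that $f(s_1)\le y'$ by join-continuity and $x^*<t_1\le s_1<t_2$. Then I would exhibit $f=g_1\vee g_2$ with $g_1,g_2<f$, where $g_1:=f\wedge y'$ and $g_2$ agrees with $f$ on $(s_1,1]$ and vanishes on $[0,s_1]$. Join-continuity of $g_1$ is immediate from monotone convergence in $\I$; the delicate point is join-continuity of $g_2$, which rests on the observation that for $t>s_1$ and $s\in(s_1,t)$ the inequality $f(s)>y'\ge f(s_1)$ forces $\bigvee_{s<t}g_2(s)=\bigvee_{s_1<s<t}f(s)=f(t)=g_2(t)$. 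The strict inequalities $g_1(t_2)=y'<f(t_2)$ and $g_2(t_1)=0<f(t_1)$ then yield $g_1,g_2<f$, contradicting join-primeness via $f\le g_1\vee g_2$.
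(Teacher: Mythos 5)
Your proof is correct, and since the paper states Proposition~\ref{prop:join-prime} without an explicit proof, there is nothing to compare it against except the intended (folklore) argument, which yours matches in substance. Both directions are sound: for join-primeness of $\ji{x,y}$, the sets $I_t$ form a chain of nonempty subsets of a finite set, so their intersection is attained and nonempty (equivalently: the up-sets $\set{t > x \mid g_i(t) \geq y}$ are linearly ordered by inclusion, so one of them is all of $(x,1]$); for the converse, the splitting $f = g_1 \vee g_2$ at the level $y'$ and the abscissa $s_1$ does witness join-reducibility, and your verification that $s_1 < t_2$ via $f(s_1) = \bigvee_{t<s_1} f(t) \leq y'$ is the one point that genuinely needs \jcont[ui]ty. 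One small remark: your justification for \jcont[ui]ty of $g_2$ at $t > s_1$ invokes the inequality $f(s) > y' \geq f(s_1)$, which is not really what is needed; the correct (and simpler) reason is that $(s_1,t)$ is cofinal in $[0,t)$, so by monotonicity $\bigvee_{s<t} g_2(s) = \bigvee_{s_1 < s < t} f(s) = \bigvee_{s<t} f(s) = f(t)$. Also note that your argument in fact shows that a non-constant-above-$x^{*}$ element fails to be join-\emph{irreducible}, which is a priori stronger than failing to be join-prime; this is harmless (and consistent with the paper's remark that the two notions coincide in the distributive lattice $\LjI$).
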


There are no \cjp elements in $\LjI$. Yet we have:
\begin{proposition}
  Every element of $\LjI$ is a join of rational
  \osf{s}. 
\end{proposition}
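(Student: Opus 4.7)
The plan is to show that for any $f \in \LjI$, the join of all rational one-step functions below $f$ recovers $f$ pointwise, exploiting join-continuity and the density of the rationals.

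First I would characterise when $\ji{x,y} \leq f$. Since $\ji{x,y}(t) = 0$ for $t \leq x$ and $\ji{x,y}(t) = y$ for $t > x$, and since $f$ is monotone, the inequality $\ji{x,y} \leq f$ is equivalent to requiring $y \leq f(s)$ for every $s > x$, that is, $y \leq \meetof{f}(x)$. In particular, for each rational $x \in \IQ$ and each rational $y \in \IQ$ with $y \leq \meetof{f}(x)$, the one-step function $\ji{x,y}$ lies below $f$.

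Next, set $g := \bigvee\{\,\ji{x,y} \mid x,y \in \IQ,\;\ji{x,y} \leq f\,\}$. Trivially $g \leq f$. For the reverse inequality I evaluate $g$ at an arbitrary $t \in \I$. For $x \geq t$, $\ji{x,y}(t) = 0$; for rational $x < t$ the contribution is
\begin{align*}
  \bigvee_{y \in \IQ,\, y \leq \meetof{f}(x)} \ji{x,y}(t)
  & = \bigvee_{y \in \IQ,\, y \leq \meetof{f}(x)} y
  = \meetof{f}(x)\,,
\end{align*}
the last equality by density of $\IQ$ in $\I$. Therefore $g(t) = \bigvee_{x \in \IQ,\, x < t} \meetof{f}(x)$.

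Finally I compare this expression to $f(t)$. Since $f$ is monotone, $f(x) \leq \meetof{f}(x)$ for every $x$, and by join-continuity in the form of~\eqref{eq:joincontinuous}, $f(t) = \bigvee_{x \in \IQ,\,x < t} f(x) \leq \bigvee_{x \in \IQ,\,x < t} \meetof{f}(x) = g(t)$. Conversely, for each rational $x < t$ we may pick $x'$ with $x < x' < t$, and then $\meetof{f}(x) \leq f(x') \leq f(t)$, so $g(t) \leq f(t)$. This gives $g = f$, as required. The argument is essentially routine once the correspondence between $\ji{x,y}$ and $\meetof{f}$ is spotted; the only delicate point is keeping straight that one uses $\meetof{f}$ in the lower approximation but recovers $f$ itself via join-continuity, which is precisely why one must pass from $x < t$ to a strictly intermediate $x'$.
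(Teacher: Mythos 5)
Your proof is correct, and since the paper states this proposition without giving a proof, your argument supplies exactly the expected one: characterise $\ji{x,y}\leq f$ as $y\leq\meetof{f}(x)$, use density of the rationals to recover $\meetof{f}(x)$ from rational $y$, and use join-continuity \eqref{eq:joincontinuous} together with the interpolation $x<x'<t$ to identify the resulting pointwise join with $f$. The edge cases (e.g.\ $t=0$, where both sides are the empty join) are harmless, so nothing is missing.
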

Meet-irreducible elements are easily characterized using duality; they
belong to the join-semilattice generated by the \jp elements. Using
duality, the following proposition is derived.
\begin{proposition}
  $\LjI$ is the Dedekind-MacNeille completion of the sublattice
  generated by the rational \osf{s}.
\end{proposition}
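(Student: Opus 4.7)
The plan is to use the standard characterization of the Dedekind--MacNeille completion: a complete lattice $M$ containing a sub-poset $P$ is the DM completion of $P$ if and only if $P$ is both join-dense and meet-dense in $M$. Letting $L$ denote the sublattice of $\LjI$ generated by the rational \osf{s}, it suffices to verify both density conditions for $L$. Join-density is immediate from the preceding proposition, since every $f \in \LjI$ is already a join of rational \osf{s}, and these lie in $L$.

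For meet-density I would exploit the duality $\opp{(-)}$, which is an order-reversing involution of $\LjI$ and hence interchanges arbitrary joins with arbitrary meets. Given $f \in \LjI$, applying the preceding proposition to $\opp{f}$ yields rational \osf{s} $\ji{x_i, y_i}$ with $\opp{f} = \bigvee_i \ji{x_i, y_i}$. Taking duals on both sides produces
\begin{align*}
  f & = \oppopp{f} = \bigwedge_{i} \opp{\ji{x_i, y_i}}\qquad \text{in } \LjI\,,
\end{align*}
with each $\opp{\ji{x_i, y_i}}$ lying above $f$; this is exactly what meet-density requires, provided each $\opp{\ji{x_i, y_i}}$ belongs to $L$.

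The key technical step is therefore to check that $\opp{\ji{x, y}} \in L$ for each rational pair $(x, y)$. Unfolding the definition $\opp{\ji{x, y}} = \joinof{(\radj{\ji{x, y}})}$ on the two-valued function $\ji{x, y}$ yields the identity
\begin{align*}
  \opp{\ji{x, y}} & = \ji{0, x} \vee \ji{y, 1}\,,
\end{align*}
which exhibits $\opp{\ji{x, y}}$ as a finite join of two rational \osf{s}, hence as an element of $L$. Degenerate cases ($x \in \set{0, 1}$ or $y \in \set{0, 1}$) collapse either to $\bot$ or to $\top$, both of which are themselves rational \osf{s}. This explicit computation is the main obstacle, but it is routine once one writes out the definitions of right adjoint and of $\joinof{(-)}$ on a piecewise-constant function.
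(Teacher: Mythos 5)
Your proof is correct and follows essentially the same route the paper intends: join-density comes directly from the preceding proposition, and meet-density is obtained via the involution $\opp{(-)}$ together with the observation that each $\opp{\ji{x,y}}$ is a finite join of rational \osf{s} (your explicit formula $\opp{\ji{x,y}} = \ji{0,x} \vee \ji{y,1}$ checks out). The paper only sketches this ("meet-irreducible elements belong to the join-semilattice generated by the join-prime elements; using duality, the proposition is derived"), so your writeup supplies the same argument in more detail.
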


\paragraph{Join-irreducible elements of $\LId$.}
Let now $d \geq 3$ be fixed. The lattice $\LId$ is no more
distributive; we characterize therefore its \jirr elements.
We associate to  a vector $p \in \I^{d}$ the tuple
$\ji{p} \in \PrLI$ defined as follows:
\begin{align*}
  \ji{p} & := \Fam{\ji{p_{i},p_{j}} \mid (i,j) \in \cd}\,.
\end{align*}
\begin{proposition}
  The elements of the form $\ji{p} \in \PrLI$ are clopen and they are
  exactly the \jirr elements of $\LId$ (whenever $\ji{p} \neq
  \bot$). Every element of $\LId$ is the join of the \jirr elements
  below it.
\end{proposition}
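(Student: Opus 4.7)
The plan splits the proposition into four claims: each $\ji{p}$ is clopen, every element of $\LId$ is a join of \jirr elements below it, each nonzero $\ji{p}$ is \jirr, and every \jirr element has the form $\ji{p}$. The first three are concrete computations; the fourth is the main obstacle.

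For clopenness I would compute directly with one-step functions. Closedness holds because $\ji{p_{j,k}} \circ \ji{p_{i,j}} = \bot$: the inner function takes values in $\{0, p_j\}$, both annihilated by $\ji{p_{j,k}}$ (which vanishes on $[0, p_j]$). So $\ji{p_{i,j}} \otimes \ji{p_{j,k}} = \bot \leq \ji{p_{i,k}}$. Openness reduces to the sharper equality $\ji{p_{i,j}} \oplus \ji{p_{j,k}} = \ji{p_{i,k}}$, obtained by using $\meetof{\ji{x,y}} = \JI{x,y}$, composing the two $\JI$-maps (a short case split gives $\JI{p_i,p_k}$), and applying $\joinof$.

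For the generation claim I would use the paths of \secRef{pathsDimensionMore}. For $c \in C_f$, the defining inequalities of $C_f$, combined with Remark~\ref{remark:adj}, give $c_j \leq \meetof{f_{i,j}}(c_i)$, which is exactly $\ji{c_i, c_j} \leq f_{i,j}$; thus $\ji{c} \leq f$ for every $c \in C_f$. Conversely, Lemma~\ref{lemma:Crightadj} produces, for each $s \in \I$, a point $c \in C_f$ with $c_i = s$ and $c_j = f_{i,j}(s)$, so $\ji{s, f_{i,j}(s)} \leq f_{i,j}$; join-continuity of $f_{i,j}$ then delivers $\bigvee_{c \in C_f} \ji{c_i, c_j} = f_{i,j}$ componentwise, and closedness of $f$ upgrades this to equality in $\LId$. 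For the forward half of the characterisation, suppose $\ji{p} = f \vee g$ in $\LId$ with $f, g \leq \ji{p}$ clopen. The closedness computation shows $\ji{p_{a,b}} \otimes \ji{p_{b,c}} = \bot$, so every nontrivial product in the closure formula for $(\cl{f \vee g})_{i,j}$ is $\bot$, and $\ji{p_{i,j}} = f_{i,j} \vee g_{i,j}$ componentwise in $\LjI$. Proposition~\ref{prop:join-prime} then places each pair into $S_f := \{(i,j) : f_{i,j} = \ji{p_{i,j}}\}$ or into $S_g$, with $S_f \cup S_g = \cd$. Openness of $f$, together with the identity $\ji{p_{i,j}} \oplus \ji{p_{j,k}} = \ji{p_{i,k}}$ from the first step, forces $S_f$ to be closed under splitting: if $(i,k) \in S_f$ then $\ji{p_{i,k}} = f_{i,k} \leq f_{i,j} \oplus f_{j,k} \leq \ji{p_{i,j}} \oplus \ji{p_{j,k}} = \ji{p_{i,k}}$, so the middle is an equality, and a direct analysis (any strict inequality in a summand forces the inner composition $\meetof{f_{j,k}} \circ \meetof{f_{i,j}}$ to collapse to $0$ below the critical argument and makes the $\oplus$ strictly smaller than $\ji{p_{i,k}}$) yields $(i,j), (j,k) \in S_f$. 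Applied to $(1,d)$, this propagation gives $S_f = \cd$ or $S_g = \cd$, so $f = \ji{p}$ or $g = \ji{p}$.

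The main obstacle is the converse: showing every \jirr $f \ne \bot$ has the form $\ji{p}$. The generation claim yields $f = \bigvee_{c \in C_f} \ji{c}$, but this is a priori an infinite join whereas \jirr controls only finite ones. I would attempt to show that whenever $f \ne \ji{c}$ for all $c \in C_f$, there exist two distinct points $c_1, c_2 \in C_f$ and a clopen tuple $h < f$ with $\ji{c_1} \vee h = f$ in $\LId$, producing a nontrivial finite decomposition that contradicts \jirr. Constructing such an $h$---essentially a clopen witness below $f$ that complements $\ji{c_1}$ without already equalling $f$---is where the real work lies.
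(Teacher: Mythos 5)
Your four-way decomposition is sensible, and the first three claims are handled correctly: the computation $\ji{p_{i,j}}\otimes\ji{p_{j,k}}=\bot$ for closedness; the equivalence of $\ji{c_{i},c_{j}}\leq f_{i,j}$ with $c_{j}\leq\meetof{(f_{i,j})}(c_{i})$ for $c\in C_{f}$, combined with Lemma~\ref{lemma:Crightadj} and join-continuity, to get $f=\bigvee_{c\in C_{f}}\ji{c}$ already as a pointwise join (so no closure is needed); and the observation that every nontrivial product in the closure of $f\vee g\leq\ji{p}$ vanishes, so that a binary join below $\ji{p}$ is computed componentwise and Proposition~\ref{prop:join-prime} applies coordinate by coordinate. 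One caveat: the ``sharper equality'' $\ji{p_{i},p_{j}}\oplus\ji{p_{j},p_{k}}=\ji{p_{i},p_{k}}$ on which your propagation step leans fails at degenerate coordinates (for instance $\ji{p_{i},0}\oplus\ji{0,p_{k}}=\ji{0,p_{k}}>\ji{p_{i},p_{k}}$ when $p_{i}>0$); the propagation of membership in $S_{f}$ still goes through in these cases, but by a separate computation that you should spell out rather than by the equality you invoke.

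The genuine gap is the one you name yourself: you do not prove that every \jirr\ element of $\LId$ is of the form $\ji{p}$, which is half of the word ``exactly'' in the statement. This half cannot be extracted from $f=\bigvee_{c\in C_{f}}\ji{c}$ alone, since \jirr{}ness constrains only finite joins while that decomposition is infinite; and your proposed repair --- for $f$ not of the form $\ji{c}$, exhibit a clopen $h<f$ with $\ji{c_{1}}\vee h=f$ --- is stated without any indication of how to choose $c_{1}$ and $h$ so that $h\neq f$. The natural candidate $h:=\bigvee\set{\ji{c}\mid c\in C_{f},\ \ji{c}\not\geq\ji{c_{1}}}$ requires showing $\ji{c_{1}}\not\leq h$, i.e.\ a form of join-primeness of $\ji{c_{1}}$ with respect to this particular infinite join in $\LId$; since $\LId$ is not distributive for $d\geq3$ and the join in $\LId$ involves a closure, this is exactly where the content of the converse lies, and it is absent. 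As written, the proposal establishes clopenness, the generation claim, and one inclusion of the characterisation of \jirr\ elements, but not the full proposition.
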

As before $\LId$ is the Dedekind-MacNeille completion of its
sublattice generated by the \jirr elements. Yet, it is no longer true
that every element of $\LId$ is a join of elements of the form
$\ji{p}$ with all the $p_{i}$ rational
and therefore $\LId$ is not the \DMNc of its sublattice generated by
this kind of elements.

\medskip

Let us explain the significance of the previous observations.
For each vector $v \in \N^{d}$ there is an embedding $\iota_{v}$ of
the multinomial lattice $\L(v)$ (see \cite{BB,STA}) into $\LId$,
\begin{wrapfigure}[5]{r}
  {0.3\textwidth} 
  \onlyllncs{\vspace{-24pt}}
  \nollncs{\vspace{6pt}}
  \begin{tikzcd}
    \L(v) \arrow[rr,rightarrowtail,"\iota_v" description] && \LId
    \arrow[ll,bend right,"\ell_v" description]
    \arrow[ll,bend left,"\rho_v" description]
  \end{tikzcd} 
\end{wrapfigure}
as in the diagram on the right, where $\ell_{v}$ and $\rho_{v}$ are,
respectively, the left and right adjoint to $\iota_{v}$.  These
embeddings form a directed diagram whose colomit can be identified
with the sublattice of $\LId$ generated by the elements $\ji{p}$ with
all the $p_{i}$, $i \in \setof{d}$, rational.  The fact $\LId$ is not
the \DMNc of this sublattice means that, while we can still define
approximations of elements of $\LId$ in the multinomial lattices via
adjoints, these approximations do not converge to what they are meant
to approximate.
For example, we could define $\appr(f) := \ell_{v}(f)$ and yet have
$\bigvee_{v \in \N^{d}} \iota_{v}(\appr(f)) < f$.
On the other hand, it is possible to prove that every \mirr element is
an infinite join of \jirr elements arising from a rational
point. Therefore we can state:
\begin{proposition}
  \label{prop:last}
  Every element of $\LId$ is a meet of joins (and a join of meets) of
  elements in the sublattice of $\LId$ generated by the $\ji{p}$ such
  that $p_{i}$ is rational for each $i \in \setof{d}$.
\end{proposition}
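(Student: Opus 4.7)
My plan combines two ingredients: (i) every element of $\LId$ is a meet of \mirr elements, and (ii) every \mirr element of $\LId$ is an infinite join of $\ji{p}$'s with all $p_{i}$ rational. Ingredient (ii) is the assertion made in the paragraph immediately preceding the proposition. Ingredient (i) I would obtain by invoking the anti-isomorphism $f \mapsto \opp{f}$ of $\LId$, inherited from the order-reversing involution on $\PrLI$ that exchanges closed and open tuples. Under this anti-iso, \jirr elements correspond bijectively to \mirr elements; so applying the preceding proposition (``every element is a join of \jirr's'') to $\opp{f}$ and then transporting back via $f = \oppopp{f}$ expresses $f = \bigwedge_{\alpha}\opp{j_{\alpha}}$ as a meet of \mirr elements.

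Chaining (i) and (ii), each $f \in \LId$ admits a representation
\begin{align*}
  f & = \bigwedge_{\alpha}\bigvee_{\beta}\ji{p^{(\alpha,\beta)}}
\end{align*}
with all coordinates of each $p^{(\alpha,\beta)}$ rational; this is the ``meet of joins'' half of the claim. The ``join of meets'' half I would obtain by applying the same duality to $\opp{f}$: writing $\opp{f}$ as a meet of joins of rational $\ji{q}$'s and then applying $\opp{\,\cdot\,}$ presents $f$ as a join of meets of tuples of the form $\opp{\ji{q}}$. It remains to observe that each such $\opp{\ji{q}}$ lies in the sublattice generated by rational $\ji{r}$'s: since $\ji{q}$ is \jirr, $\opp{\ji{q}}$ is \mirr, so by ingredient (ii) it is already an infinite join of rational $\ji{r}$'s, hence a fortiori in the sublattice they generate.

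The main obstacle is ingredient (ii), which is stated but not proven in the excerpt. Its difficulty reflects the non-distributivity of $\LId$ for $d \geq 3$: a \mirr clopen tuple $m$ is maximal among those failing to dominate a given \jirr element, and expressing $m$ as a join of rational $\ji{p}$'s would require analysing the structure of such maximal failures and approximating the critical coordinates from below, leveraging the density of $\IQ$ in $\I$ together with the fact that each coordinate $m_{i,j} \in \LjI$ is itself a join of rational one-step functions.
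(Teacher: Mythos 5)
Your ``meet of joins'' half follows the route the paper itself takes: dualize ``every element of $\LId$ is the join of the \jirr elements below it'' to write $f$ as a meet of \mirr elements, then expand each \mirr element as an infinite join of rational $\ji{q}$'s. Since each rational $\ji{q}$ is a generator of the sublattice, this is indeed a meet of (possibly infinite) joins of sublattice elements, and you are right that the entire burden sits on ingredient (ii), which the paper asserts without proof at this point.

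The ``join of meets'' half, however, has a genuine gap. You claim that each $\opp{(\ji{q})}$ ``is an infinite join of rational $\ji{r}$'s, hence a fortiori in the sublattice they generate.'' A sublattice is closed only under \emph{finite} joins and meets, so an infinite join of its elements need not belong to it; indeed the point of the surrounding discussion is precisely that infinite joins of rational $\ji{p}$'s escape this sublattice --- otherwise $\LId$ would be the \DMNc of it, which the paper explicitly denies for $d \geq 3$. As written, your dualization only exhibits $f$ as a join of meets of \emph{joins} of sublattice elements, which is one alternation too many. To close the gap you must show that the \mirr elements $\opp{(\ji{q})}$ with $q$ rational themselves lie in the sublattice; for instance, such a tuple has every coordinate equal to a step function with finitely many rational steps, so it corresponds to a discrete path and lies in the image of $\iota_{v}$ for a suitable $v \in \N^{d}$. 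Granting that, the correct argument for this half is: every \jirr element $\ji{p}$ (with $p$ arbitrary) is an infinite meet of \mirr elements $\opp{(\ji{q})}$ with $q$ rational (the dual of ingredient (ii)), and every element of $\LId$ is the join of the \jirr elements below it; chaining these gives a join of meets of sublattice elements.
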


Whether the last proposition 
is the key to use the lattices
$\LId$ as well as the multinomial lattices for higher dimensional
approximations in discrete geometry is an open problem that we shall
tackle in future research.

\section{Conclusions}
\label{sec:conclusions}

In this paper we have shown how to extend the lattice structure on a
set of discrete paths (known as a multinomial lattice, or \wBo, if the
words coding these paths are permutations) to a lattice structure on
the set of (images of) continuous paths from $\I$, the unit interval
of the reals, to the cube $\I^{d}$, for some $d \geq 2$.

By studying the structure of these lattices, called here $\LId$, we
have been able to identify an intrinsic difficulty in defining
discrete approximations of lines in dimensions $d \geq 3$ (a problem
that motivated us to develop this research).
This stems from the fact that $\LId$ is no longer (when $d \geq 3$) generated
by its sublattice of discrete paths as a Dedekind-Macneille
completion.
Proposition~\ref{prop:last} exactly describes how the lattice $\LId$
is generated from discrete paths and might be the key to use the
lattices $\LId$ as well as the multinomial lattices for defining
higher dimensional approximations of lines. We shall tackle this
problem in future research.

As a byproduct, our paper also pinpoints that various generalizations
of \Permutohedra crucially rely on the algebraic (but also logical)
notion of \msaq. Every such quantale yields an infinite family of
lattices indexed by positive integers.  While the definition of these
lattices becomes straightforward by means of the algebra, it turns out
that the elements of these lattices are (as far as observed up to now)
in bijective correspondence either with interesting combinatorial
objects (permutations, pseudo-permutations) or with geometric ones
(continuous paths, as seen in this paper). These intriguing
correspondences suggest the existence of a deep connection between
combinatorics/geometry and logic. Future research shall unravel these
phenomena. A first step, already under way for the Sugihara monoids on
a chain, shall 
systematically identify the combinatorial objects arising from a given
\msaq $Q$.

\nollncs{\newpage}
\bibliographystyle{abbrv}
\bibliography{biblio}

\begin{thebibliography}{10}

\bibitem{ANDRES2003}
E.~Andres.
\newblock Discrete linear objects in dimension n: the standard model.
\newblock {\em Graphical Models}, 65(1):92 -- 111, 2003.

\bibitem{barr79}
M.~Barr.
\newblock {\em $\star$--Autonomous Categories}.
\newblock Number 752 in LNM. Springer-Verlag, 1979.

\bibitem{BB}
M.~K. Bennett and G.~Birkhoff.
\newblock Two families of {N}ewman lattices.
\newblock {\em Algebra Universalis}, 32(1):115--144, 1994.

\bibitem{reutenauer}
J.~Berstel, A.~Lauve, C.~Reutenauer, and F.~V. Saliola.
\newblock {\em Combinatorics on words}, volume~27 of {\em CRM Monograph
  Series}.
\newblock American Mathematical Society, Providence, RI, 2009.

\bibitem{BertheLabbe11}
V.~Berth{\'{e}} and S.~Labb{\'{e}}.
\newblock An arithmetic and combinatorial approach to three-dimensional
  discrete lines.
\newblock In I.~Debled{-}Rennesson, E.~Domenjoud, B.~Kerautret, and P.~Even,
  editors, {\em {DGCI} 2011}, volume 6607 of {\em Lecture Notes in Computer
  Science}, pages 47--58. Springer, 2011.

\bibitem{STA0}
N.~Caspard, L.~Santocanale, and F.~Wehrung.
\newblock Algebraic and combinatorial aspects of permutohedra.
\newblock In {\em Lattice theory: special topics and applications. {V}ol. 2},
  pages 215--286. Birkh\"auser, 2016.

\bibitem{cockettSeely}
J.~Cockett and R.~Seely.
\newblock Proof theory for full intuitionistic linear logic, bilinear logic,
  and mix categories.
\newblock {\em Theory and Applications of Categories}, 3(85-131), 1997.

\bibitem{Emanovsky2008}
P.~Emanovsk{\'y} and J.~Rach{\r{u}}nek.
\newblock A non commutative generalization of $\star$--autonomous lattices.
\newblock {\em Czechoslovak Mathematical Journal}, 58(3):725--740, Sep 2008.

\bibitem{pinzani}
L.~Ferrari and R.~Pinzani.
\newblock Lattices of lattice paths.
\newblock {\em J. Statist. Plann. Inference}, 135(1):77--92, 2005.

\bibitem{FeschetR06}
F.~Feschet and J.~Reveill{\`{e}}s.
\newblock A generic approach for n-dimensional digital lines.
\newblock In A.~Kuba, L.~G. Ny{\'{u}}l, and K.~Pal{\'{a}}gyi, editors, {\em
  {DGCI} 2006}, volume 4245 of {\em Lecture Notes in Computer Science}, pages
  29--40. Springer, 2006.

\bibitem{GALATOS20122177}
N.~Galatos and J.~Raftery.
\newblock A category equivalence for odd sugihara monoids and its applications.
\newblock {\em Journal of Pure and Applied Algebra}, 216(10):2177 -- 2192,
  2012.

\bibitem{compendiumCL}
G.~Gierz, K.~H. Hofmann, K.~Keimel, J.~D. Lawson, M.~Mislove, and D.~S. Scott.
\newblock {\em {A Compendium of Continuous Lattices}}.
\newblock Springer-Verlag, Berlin, 1980.

\bibitem{goubault2003}
E.~Goubault.
\newblock Some geometric perspectives in concurrency theory.
\newblock {\em Homology Homotopy Appl.}, 5(2):95--136, 2003.

\bibitem{joyaltierney}
A.~Joyal and M.~Tierney.
\newblock An extension of the {G}alois theory of {G}rothendieck.
\newblock {\em Mem. Amer. Math. Soc.}, 51(309), 1984.

\bibitem{pouzet}
M.~Kabil, M.~Pouzet, and I.~G. Rosenberg.
\newblock Free monoids and generalized metric spaces.
\newblock {\em European Journal of Combinatorics}, 2018.
\newblock To appear.

\bibitem{KLNPS01}
D.~Krob, M.~Latapy, J.-C. Novelli, H.~D. Phan, and S.~Schwer.
\newblock Pseudo-permutations {I}: First combinatorial and lattice properties.
\newblock In {\em FPSAC 2001}, 2001.

\bibitem{lawvere}
F.~W. Lawvere.
\newblock Metric spaces, generalized logic and closed categories.
\newblock {\em Rendiconti del Seminario Matematico e Fisico di Milano},
  XLIII:135--166, 1973.

\bibitem{Paoli2005}
F.~Paoli.
\newblock $\star$--autonomous lattices.
\newblock {\em Studia Logica}, 79(2):283--304, Mar 2005.

\bibitem{ProvencalVuillon}
X.~Proven{\c{c}}al and L.~Vuillon.
\newblock Discrete segments of $\mathbf{Z}^{3}$ constructed by synchronization
  of words.
\newblock {\em Discrete Applied Mathematics}, 183:102--117, 2015.

\bibitem{rosenthal1990}
K.~Rosenthal.
\newblock {\em Quantales and their applications}.
\newblock Pitman research notes in mathematics series. Longman Scientific \&
  Technical, 1990.

\bibitem{ORDER-24-3}
L.~Santocanale.
\newblock On the join depenency relation in multinomial lattices.
\newblock {\em Order}, 24(3):155--179, Aug. 2007.

\bibitem{YAAMA-51-03}
L.~Santocanale and F.~Wehrung.
\newblock Sublattices of associahedra and permutohedra.
\newblock {\em Advances in Applied Mathematics}, 51(3):419--445, Aug. 2013.

\bibitem{EJC}
L.~Santocanale and F.~Wehrung.
\newblock The extended permutohedron on a transitive binary relation.
\newblock {\em European Journal of Combinatorics}, 42:179--206, June 2014.

\bibitem{IJAC}
L.~Santocanale and F.~Wehrung.
\newblock {Lattices of regular closed subsets of closure spaces}.
\newblock {\em International Journal of Algebra and Computation},
  24(7):969--1030, Nov. 2014.

\bibitem{STA}
L.~Santocanale and F.~Wehrung.
\newblock Generalizations of the permutohedron.
\newblock In {\em Lattice theory: special topics and applications. {V}ol. 2},
  pages 287--397. Birkh\"auser, 2016.

\bibitem{JEMS}
L.~Santocanale and F.~Wehrung.
\newblock {The equational theory of the weak order on finite symmetric groups}.
\newblock To appear in the Journal of the European Mathematical Society, June
  2018.

\bibitem{stubbe}
I.~Stubbe.
\newblock An introduction to quantaloid-enriched categories.
\newblock {\em Fuzzy Sets and Systems}, 256:95 -- 116, 2014.

\bibitem{Tsinakis2006}
C.~Tsinakis and A.~M. Wille.
\newblock Minimal varieties of involutive residuated lattices.
\newblock {\em Studia Logica}, 83(1):407--423, Jun 2006.

\end{thebibliography}

\end{document}